\documentclass{article}

\usepackage{amsmath,amsthm,amsfonts,amssymb,mathabx,latexsym,enumerate,graphicx,mathrsfs}
\usepackage[utf8]{inputenc}

\usepackage{hyperref}
\hypersetup{colorlinks, citecolor=blue, linkcolor=black}

\usepackage{todonotes}
\newcommand{
  
  \input{.pdf_tex}}[2]{
  
  \input{#1.pdf_tex}}
\usepackage{tikz}
\usetikzlibrary{arrows,decorations.pathmorphing,decorations.pathreplacing,patterns,shapes.symbols}
\newcommand{\wvert}[1]{\filldraw[draw=black,fill=white] (#1) circle (0.1);}
\newcommand{\bvert}[1]{\filldraw (#1) circle (0.1);}
\newcommand{\sbvert}[1]{\filldraw (#1) circle (0.05);}
\newcommand{\chunk}[1]{
  \begin{scope}[xshift=#1]
    \draw[fill=gray!20] (-2,0) -- (2,0) -- (2,-2) -- (-2,-2)--cycle (0,0) -- (0,-2);
    \node [draw, dashed, cloud, cloud puffs=11, minimum width = 2cm, minimum height=1cm, fill=gray!60,opacity=0.9]
    at (0,0) {};
  \end{scope}
}
\newcommand{\chunkII}[1]{
  \begin{scope}[xshift=#1]
    \draw[fill=gray!20,pattern=horizontal lines] (-2,0) -- (2,0) -- (2,-2) -- (-2,-2)--cycle (0,0) -- (0,-2);
    \node [draw, dashed, cloud, cloud puffs=11, minimum width = 2cm, minimum height=1cm, fill=gray!60,opacity=0.9]
    at (0,0) {};
  \end{scope}
}

\newtheorem{thm}{Theorem}[section]
\newtheorem{cor}[thm]{Corollary}
 
\newtheorem{lem}[thm]{Lemma}

\theoremstyle{definition}
\newtheorem{defn}[thm]{Definition}

\theoremstyle{remark}

\newcommand{\define}{\textit}


\newcommand {\calC} {{\mathcal {C}}}

\newcommand {\calF} {{\mathcal {F}}}   
   
\newcommand {\calH} {{\mathcal {H}}}

\newcommand {\calL} {{\mathcal {L}}}

\newcommand {\calS} {{\mathcal {S}}}

\newcommand {\calV} {{\mathcal {V}}}

\newcommand {\calf} {{\mathcal {F}}}

\newcommand{\mo}{{-1}}
\newcommand{\onto}{\ensuremath{\twoheadrightarrow}}
\newcommand{\bk}[1]{\langle #1 \rangle}
\newcommand{\into}{\ensuremath{\hookrightarrow}}





\newcommand{\actson}{\curvearrowright}

\newcommand{\cay}[2]{{\mathrm{Cay}\left(#1,#2\right)}}
\newcommand{\edges}[1]{{\ensuremath{\mathrm{Edges}\left(#1\right)}}}
\newcommand{\verts}[1]{{\ensuremath{\mathrm{Vertices}\left(#1\right)}}}

\newcommand{\cleave}[1]{\ensuremath{\widecheck{#1}}}
\newcommand{\hcarrier}[2]{\ensuremath{{\mathcal{H}_{#1}\left(#2\right)}}}
\newcommand{\dsfac}{\preccurlyeq}
\newcommand{\pdsfac}{\prec}
\newcommand{\cafsd}{\succ}

\title{On the one-endedness of graphs of groups.}
\author{Nicholas Touikan}

\begin{document}

\maketitle
\begin{abstract}
  We give a technical result that implies a straightforward necessary
  and sufficient conditions for a graph of groups with virtually
  cyclic edge groups to be one ended. For arbitrary graphs of groups,
  we show that if their fundamental group is not one-ended, then we
  can blow up vertex groups to graphs of groups with simpler vertex
  and edge groups. As an application, we generalize a theorem of
  Swarup to decompositions of virtually free groups.
\end{abstract}

\section{Introduction}

A finitely generated group $G =\bk{S}$ is said to be
\define{one-ended} if the corresponding Cayley graph $\cay G S$ cannot
be separated into two or more infinite components by removing a finite
subset. Otherwise $G$ is said to be \define{many ended}. It is a
classical result due to Stallings \cite{Stallings-gt3dm} that a
many-ended group either decomposes as an amalgamated free product, or
an HNN extension, over a finite group.

Given the Bass-Serre correspondence between group actions on
simplicial trees and their decompositions, or splittings, as
(fundamental groups of) graphs of groups, c.f. \cite{Serre-arbres}, a
finitely generated group $G$ is many ended if and only if it acts
minimally, without inversions, and cocompactly on a simplicial tree
$T$ in which for some edge $e$ the stabilizer $G_e$ is finite.

It is often the case that a graph of groups with many ended vertex
groups is itself one ended. For example, the fundamental group of a
closed surface is one ended but it is an amalgamated free product of
free groups, which are many ended.  Theorem \ref{thm:main}, stated and
proved in Section \ref{sec:main}, essentially characterizes one ended
graphs of groups. This result is rather technical, but has many
``non-technical'' corollaries that we will now present.

We say that $G$ is \define{one ended relative to a collection $\calH$
  of subgroups} if for any minimal non-trivial $G$-tree $T$ with
finite edge stabilizers, there exists a subgroup $H \in \calH$ that
acts without a global fixed point. Or, equivalently, $G$ is
\define{many ended relative to $\calH$} if $G$ admits a non-trivial
splitting as a graph of groups relative to $\calH$ (i.e. groups in
$\calH$ are conjugate into vertex groups) with finite edge groups.

\begin{cor}\label{cor:double-over-cyclic}
  If $G_1$ is one ended relative to a collection $\calH_1 \cup
  \{C_1\}$, and $G_2$ is one ended relative to the $\calH_2 \cup
  \{C_2\}$ with $C_1 \approx C_2$ virtually cyclic groups, then any
  free product with amalgamation of the form\[
  G_1*_{C_1=C_2}G_2
\] is one ended relative to $\calH_1 \cup \calH_2$.
\end{cor}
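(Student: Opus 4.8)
The plan is to prove the contrapositive by Bass–Serre theory: given any minimal, nontrivial $G$-tree $T$ with finite edge stabilizers, I will show that some $H \in \calH_1 \cup \calH_2$ acts without a global fixed point. So suppose instead that every $H \in \calH_1 \cup \calH_2$ is elliptic in $T$ and aim for a contradiction. Throughout I use that $C := C_1 = C_2$ is \emph{infinite} virtually cyclic (two-ended); this is essential, since for finite $C$ the amalgam already splits over $C$ and the conclusion fails, so I read ``virtually cyclic'' in the two-ended sense. I also take $G_1,G_2$ finitely generated, so that whenever one of them has no global fixed point it has a unique minimal invariant subtree, and I assume the amalgam is proper ($C\neq G_1,G_2$), so that $G$ is neither finite nor two-ended.

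The engine of the argument is the following observation, applied repeatedly. Suppose $X$ is a finite-edge-stabilizer $G$-tree in which $C$ and every $H\in\calH_1\cup\calH_2$ are elliptic. Feeding the minimal $G_i$-subtree of $X$ into the hypothesis that $G_i$ is one ended relative to $\calH_i\cup\{C_i\}$ forces $G_1$ and $G_2$ each to fix a vertex, say $v_1$ and $v_2$. If $v_1=v_2$ then $G=\bk{G_1,G_2}$ fixes it, so a minimal $X$ is trivial. If $v_1\neq v_2$ then $C\le G_1\cap G_2$ fixes the geodesic $[v_1,v_2]$ pointwise, embedding the infinite group $C$ into a finite edge stabilizer, which is absurd. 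Hence any such $X$ is trivial.

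It remains to reduce $T$ to this situation by making $C$ elliptic. If $C$ is already elliptic in $T$ the engine applies directly and contradicts nontriviality. Otherwise $C$ is hyperbolic with an axis $\gamma$. Let $F\subseteq T$ be the $G$-invariant subforest of all edges lying on some translate $g\gamma$, and let $\bar T$ be the result of collapsing each component of $F$. Then $\bar T$ is a finite-edge-stabilizer $G$-tree (its edges descend from edges of $T$), every $H\in\calH_1\cup\calH_2$ remains elliptic, and $C$ is now elliptic since it preserves $\gamma$ and so fixes its image. If $\bar T$ is nontrivial the engine gives a contradiction; if $\bar T$ is trivial then every edge of $T$ lies on a conjugate of $\gamma$, which is the genuinely hard case.

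In that remaining case $T$ is a ``tree of lines'': the union of the translates $\{g\gamma\}$, where the key geometric fact is that two \emph{distinct} translates meet in a bounded segment (a common ray would force the commensurable groups $C$ and $gCg^{-1}$ to share the axis, giving $g\gamma=\gamma$). I would organize this line pattern into a bipartite $G$-tree $\hat T$ whose vertices of one type are the lines $g\gamma$ and of the other type are the bounded overlap components, with incidence given by containment. Then the line-vertex stabilizers are the two-ended groups $\mathrm{Stab}_G(g\gamma)$, in which $C$ is elliptic, while all edge and overlap-vertex stabilizers fix bounded segments and are therefore finite. Thus $\hat T$ is once more a finite-edge-stabilizer $G$-tree in which $C$ and all of $\calH_1\cup\calH_2$ are elliptic: a nontrivial $\hat T$ contradicts the engine, while a trivial $\hat T$ would identify $G$ with a two-ended or finite vertex stabilizer, impossible for a proper amalgam. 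Constructing $\hat T$ and verifying these stabilizer properties is exactly the main obstacle, and it is precisely the ``tree of cylinders'' style organization of the conjugate axes of $C$ that is packaged by Theorem \ref{thm:main}; in a polished proof I would invoke that theorem here rather than build $\hat T$ by hand.
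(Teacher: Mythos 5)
Your route is genuinely different from the paper's, and part of it is a real simplification. The paper proves the contrapositive and applies Theorem \ref{thm:blowup} (i.e.\ Theorem \ref{thm:main}, via Guirardel's core of $T_\infty\times T_\calf$, where $T_\infty$ is the Bass--Serre tree of $G_1*_{C}G_2$ and $T_\calf$ is the given finite-edge-stabilizer tree) in a single step: since every essential splitting of the two-ended group $C$ has finite edge groups, the blowup $\cleave T_v$ always has finite edge groups, is minimal, keeps $C=G_e$ and the relevant $H^g\cap G_v$ elliptic, and so witnesses that some $G_i$ is not one ended relative to $\calH_i\cup\{C\}$. Your ``engine'' (project fixed points to the minimal $G_i$-subtrees, then use that the infinite group $C\le G_1\cap G_2$ cannot fix an edge path between distinct fixed vertices) is correct and entirely elementary, and together with the collapse of the axis-forest it disposes of every case in which $C$ can be made elliptic. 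That is a lighter argument than the paper's for those cases.

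However, the remaining case --- $C$ hyperbolic on $T$ with every edge of $T$ on a translate of its axis $\gamma$ --- is where the entire content of the corollary is concentrated, and there your argument has a genuine gap. The bounded-overlap claim is justified by asserting that $C$ and $gCg^{-1}$ are commensurable; conjugate subgroups need not be commensurable (think of $\bk a$ and $\bk{bab^{-1}}$ in a free group), so this is not a proof. What one can actually extract is: if $\gamma\cap g\gamma$ contains a ray toward an end $\eta$, and $c\in C$, $c'\in gCg^{-1}$ are hyperbolic elements translating toward $\eta$ by the same length, then $c^{-1}c'$ fixes a subray pointwise; the set of such ``asymptotically elliptic'' elements of $\bk{c,c'}$ forms a locally finite normal subgroup, and only if that subgroup is \emph{finite} (which requires some bound on torsion that the corollary does not assume) do you get $\bk c$ and $\bk{c'}$ commensurable and hence $g\gamma=\gamma$. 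In addition, the claim that overlap-vertex stabilizers are finite fails when an overlap component is a single vertex of $T$, and you never verify that $\hat T$ is connected and simply connected. Your stated fallback --- ``invoke Theorem \ref{thm:main} here'' --- would in practice be an application of Theorem \ref{thm:blowup} to the pair consisting of the Bass--Serre tree of the amalgam and $T$, which is exactly the paper's one-step proof and renders your preceding case analysis redundant. As written, the hard case is not proved.
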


In the case of graphs of free groups with cyclic edge groups, this
corollary (actually its natural generalization, c.f. Corollary
\ref{cor:graph-of-two-ended}) is proved in \cite[Theorem
18]{Wilton-one-ended} and implied by results in
\cite{Diao-Feighn-free-split}. Corollary \ref{cor:double-over-cyclic}
is false if we do not require the amalgamating subgroups to be
virtually cyclic or, synonymously, two ended. Nonetheless, we can still
understand the failure of one endedness of general graphs of groups.

\begin{defn}
  A $G$-equivariant map $S \to T$ of simplicial $G$-trees is called a
  \define{collapse}, if $T$ is obtained by identifying some edge
  orbits of $S$ to points. In this case we also say that $S$ is
  obtained from $T$ by a \define{blow up}. We call the preimage
  $\cleave T_v \subset S$ of a vertex $v\in T$ its \define{blowup}.
\end{defn}

\begin{defn}\label{defn:accessible}
  We write $H \dsfac G$ to signify that $G$ splits essentially
    as a graph of groups with finite edge groups and $H$ is a vertex
    group. A group $G$ is \define{accessible}, if it admits no
    infinite proper chains\[
    G \cafsd G_1\cafsd G_2 \cafsd \ldots
    \]
\end{defn}

For example, if $F$ is a free group and $H\dsfac F$, then $H$ is a
free factor of $F$. This next theorem, a formal consequence of Theorem
\ref{thm:main}, states that if a graph of groups with finitely
generated infinite edge group is not one ended, then we can blow up
some of its vertex groups.

\begin{thm}\label{thm:blowup}
  If $T$ is a $G$-tree (in which a collection of subgroups $\calH$ act
  elliptically) with infinite edge groups and $G$ is not one-ended
  (relative to $\calH$) then there is a vertex $v \in \verts T$ and an
  edge $e \in \edges T$ with $v \in e$ such that the orbit of $v$ can
  be blown up with $G_v$ acting minimally on the non-trivial blow ups
  $\cleave T_v$ satisfying the following properties:
  \begin{itemize}
  \item $G_e \leq G_v$ is the stabilizer of a vertex in $\cleave
    T_v$.
  \item The edge groups of $\cleave T_v$ are conjugate in $G_v$ to the
    vertex groups of an essential amalgamated free product or HNN
    decomposition of $G_e$ with a finite edge group.
  \end{itemize}
  In particular, in the tree $S$ obtained by blowing up the orbit of
  $v$ in $T$ to $\cleave T_v$, each vertex or edge stabilizer of $S$
  is $\dsfac$ a vertex or edge stabilizer of $T$, and at least one of
  these inclusions is a strict. Furthermore the groups in $\calH$ act
  elliptically on $S$.
\end{thm}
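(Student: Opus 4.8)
The plan is to reinterpret the hypothesis as the existence of a finite-edge-group splitting and then combine it with $T$. By definition, $G$ being many ended relative to $\calH$ means there is a non-trivial minimal $G$-tree $U$ with finite edge stabilizers on which every $H \in \calH$ acts elliptically. The elementary observation driving everything is that since each edge group $G_e$ of $T$ is infinite, no conjugate of $G_e$ can fix an edge of $U$; hence if $G_e$ is elliptic in $U$ it fixes a \emph{unique} vertex, and, crucially, if every vertex group $G_v$ of $T$ were elliptic in $U$ then choosing fixed points equivariantly would give a $G$-map $T \to U$ collapsing every edge (an edge $[v,w]$ maps to a geodesic fixed by the infinite group $G_e$, which must therefore be degenerate), forcing $U$ to be a point and contradicting non-triviality. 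So some vertex group of $T$ acts on $U$ without a global fixed point.

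First I would use this to locate the vertex and edge. I fix a vertex $v \in \verts{T}$ whose stabilizer $G_v$ does not fix a point of $U$, and let $U_v \subseteq U$ be the minimal $G_v$-invariant subtree; restricting the action equips $G_v$ with a non-trivial finite-edge splitting. The edge $e$ with $v \in e$ is then chosen to be one at which $G_e$ interacts non-trivially with this splitting, so that the induced action of $G_e$ on $U$ produces a genuine essential amalgam or HNN decomposition of $G_e$ over a finite group; the vertex factors of this decomposition are exactly the subgroups $G_e \cap G_u$, as $u$ ranges over the $G_e$-orbits of vertices of the minimal $G_e$-subtree of $U$.

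The technical heart, for which I would invoke Theorem \ref{thm:main}, is to assemble these pieces into the blow-up $\cleave{T}_v$: a $G_v$-tree containing one vertex with stabilizer exactly $G_e$, attached to $U_v$ by edges whose stabilizers are the vertex factors $G_e\cap G_u$ of the essential finite-edge decomposition of $G_e$, with every other incident edge of $T$ attached to the vertex of $U_v$ fixed by its (elliptic) edge group. The point requiring real care, and the main obstacle, is to verify \emph{simultaneously} that (i) every incident edge group of $T$ becomes elliptic in $\cleave{T}_v$, so the blow-up is a legitimate $G_v$-tree and the collapse $S \to T$ exists; (ii) $G_e$ is realized as an honest vertex stabilizer rather than merely a subgroup of one; and (iii) the edge stabilizers of $\cleave{T}_v$ are precisely the vertex factors of an essential decomposition of $G_e$ rather than the ambient finite edge groups of $U_v$. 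Passing to the minimal $G_v$-subtree then yields the promised $\cleave{T}_v$ with $G_v$ acting minimally.

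Finally I would read off the ``in particular'' clause by bookkeeping stabilizers across the collapse $S \to T$. Cells of $S$ outside the blow-up retain their $T$-stabilizers, for which $\dsfac$ holds reflexively. A vertex of $\cleave{T}_v$ inherited from $U_v$ has stabilizer $G_v \cap G_u$, a vertex group of the finite-edge splitting of $G_v$, so it is $\dsfac G_v$; the distinguished vertex has stabilizer $G_e$, itself an edge stabilizer of $T$; and each edge of $\cleave{T}_v$ has stabilizer a vertex factor $A = G_e \cap G_u \dsfac G_e$ of an essential splitting, hence with $G_e \cafsd A$, which supplies the required \emph{strict} inclusion. Transitivity of $\dsfac$ then gives that every vertex or edge stabilizer of $S$ is $\dsfac$ a vertex or edge stabilizer of $T$. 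For the last assertion, each $H \in \calH$ is elliptic in both $T$ and $U$; if $H$ is conjugate into $G_v$, its fixed set in $U$ projects to a fixed point in the $G_v$-invariant subtree $U_v$, so $H$ remains elliptic in $\cleave{T}_v$ and therefore in $S$.
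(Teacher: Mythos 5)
The paper obtains this theorem as a direct, formal consequence of Theorem \ref{thm:main}: one takes $T_\infty=T$ and $T_\calf$ to be a minimal cocompact $G$-tree with finite edge groups witnessing the failure of one-endedness relative to $\calH$, and then conclusions (\ref{it:star}), (\ref{it:still-elliptic}) and (\ref{it:Ge-split})--(\ref{it:vertex-gps}) are verbatim the clauses of Theorem \ref{thm:blowup}. You also invoke Theorem \ref{thm:main} for ``the technical heart,'' so the route is nominally the same, but your reconstruction of what that theorem delivers has two genuine gaps. First, you choose $e\ni v$ ``at which $G_e$ interacts non-trivially'' with the splitting of $G_v$ induced by $U$, i.e.\ you assume some incident edge group is non-elliptic on $U$. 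This can fail: every $G_e$ with $e\ni v$ may be elliptic on $U$ even though $G_v$ is not, and this is exactly the dichotomy between cases (\ref{it:finite-edge-gp-case}) and (\ref{it:cleave-case}) of Theorem \ref{thm:main}. Your outline silently discards the first case, in which $\cleave T_v$ has finite edge groups and no essential splitting of any incident $G_e$ is produced.

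Second, the tree you describe --- $U_v$ with a cone vertex for $G_e$ attached along the subgroups $G_e\cap G_u$ --- is not the blow-up that Theorem \ref{thm:main} constructs and cannot satisfy the second bullet: its edge set would still contain the edges of $U_v$, whose stabilizers are the \emph{finite} edge groups of $U$ rather than vertex factors of a splitting of $G_e$, and the remaining incident edge groups $G_f$ need not fix any vertex of $U_v$, so there is nothing to attach them to. In the paper's construction $\cleave T_v$ is obtained by cutting the vertex space $\tau_v$ of the Guirardel core along the $G_v$-orbit of a single free face $\epsilon$ and collapsing the resulting components $C_i$ to vertices; its edge stabilizers are the stabilizers $(G_e)_{K_i}$ of the components of $\tau_e\setminus G_e\overline{\epsilon}$, i.e.\ the vertex groups of a \emph{one-edge} collapse of the induced splitting of $G_e$, which are in general strictly larger than the single-vertex stabilizers $G_e\cap G_u$ you name. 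The simultaneous verification of your points (i)--(iii) --- finding one edge orbit of the $G_e$-splitting compatible with a cut of $\tau_v$ while keeping all other incident edge groups elliptic --- is precisely the content of the shaving and free-face argument in the proof of Theorem \ref{thm:main}; it cannot be arranged by hand from $U_v$ and then certified after the fact. The repair is simply to apply Theorem \ref{thm:main} and quote its conclusions; your closing bookkeeping for the ``in particular'' clause is then essentially correct, provided the factors $G_e\cap G_u$ are replaced by the component stabilizers above.
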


We note that blowing up a $G$-tree is equivalent to \emph{refining} a
graphs of groups. If $G$ acts on a tree with accessible vertex and
edge stabilizers then the order $\pdsfac$ actually tells us that the
vertex groups of the blowup given by Theorem \ref{thm:blowup} have
lower complexity, in the sense that the process of successively
blowing up vertex groups in this manner must terminate in finitely
many steps.

Accessible groups, in turn, are abundant: Linnell in
\cite{Linnell-acc} showed that if there is a global bound on the order
of finite order elements in a finitely generated group, then the group
is accessible. Dunwoody in \cite{Dunwoody-1985} showed that finitely
presented groups are accessible. We now use Theorem \ref{thm:blowup}
to give a proof of Corollary \ref{cor:double-over-cyclic}:

\begin{proof}[Proof of Corollary \ref{cor:double-over-cyclic}]
  We show the contrapositive. Let $T$ be the Bass-Serre tree dual to
  the splitting $G=G_1*_C G_2$, and suppose that $G$ is not one-ended
  relative to $\calH=\calH_1 \cup \calH_2$.  Note that any
  decomposition of a virtually cyclic group as an HNN extension or an
  essential amalgamated free product must have finite edge groups. It
  follows that in all cases, by Theorem \ref{thm:blowup}, some orbit
  of vertices $Gv$ can be blown up to minimal $gG_vg^\mo$-trees with
  finite edge groups, implying that one of the vertex groups $G_i$
  fixing some vertex $v \in \verts{T}$ acts minimally on $\cleave T_v$
  with finite edge stabilizers with \[\calH_i = \left\{ H \in \calH
    \mid H \cap G_i \neq \{1\}\right\}\] and $C_i = G_e$ for some $v
  \in e \in \edges{T}$ acting elliptically. It follows that $G_i$ is
  not one ended relative to $\calH_i \cup \{C\}$.
\end{proof}

This proof is easily adapted to give:

\begin{cor}\label{cor:graph-of-two-ended}
  The fundamental group $G$ of a graph groups with two-ended edge
  groups is one ended (relative to a collection $\calH$ of subgroups) if
  and only if every vertex group $G_v$ is one ended relative to the
  incident edge groups (and the collection $\{H^g \cap G_v \mid g\in
  G, H \in \calH\}$.)
\end{cor}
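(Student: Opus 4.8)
The plan is to prove both implications by contraposition, working throughout with the Bass--Serre tree $T$ of the given graph of groups, whose edge stabilizers are two-ended and on which every $H \in \calH$ acts elliptically. For each vertex $v$ write $\calE_v$ for the collection of incident edge groups $G_e$ with $v \in e$, and set $\calH_v = \{H^g \cap G_v \mid g \in G,\ H\in\calH\}$, so that the right-hand condition reads: each $G_v$ is one ended relative to $\calE_v \cup \calH_v$. The two contrapositives to establish are then ``$G$ not one ended relative to $\calH$ $\Rightarrow$ some $G_v$ not one ended relative to $\calE_v\cup\calH_v$'' and its converse.

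For the first contrapositive I would argue exactly as in the proof of Corollary \ref{cor:double-over-cyclic}. Assuming $G$ is not one ended relative to $\calH$, Theorem \ref{thm:blowup} applies to $T$ (its edge groups being infinite) and produces a vertex $v$ and an incident edge $e$ such that the orbit of $v$ blows up to a nontrivial minimal $G_v$-tree $\cleave T_v$ with finite edge groups, in which $G_e$ is a vertex stabilizer and on whose refinement $S$ every $H\in\calH$ acts elliptically. The first step is to note that, since $S$ genuinely refines $T$, every old edge of $T$ incident to $v$ must reattach to a vertex of $\cleave T_v$; hence not only $G_e$ but every member of $\calE_v$ fixes a vertex of $\cleave T_v$. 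The second step is to check that each $H^g \cap G_v \in \calH_v$ is elliptic on $\cleave T_v$: it is elliptic on $S$, so if its fixed set meets the blow-up region we are done, and otherwise any fixed point lies outside that region, forcing this subgroup of $G_v$ to fix an edge of $T$ at $v$ and hence to lie in a member of $\calE_v$ already shown to be elliptic on $\cleave T_v$. Thus $\cleave T_v$ witnesses that $G_v$ is not one ended relative to $\calE_v\cup\calH_v$.

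For the converse I would give a direct refinement argument using only elementary Bass--Serre theory. Suppose some $G_v$ is not one ended relative to $\calE_v\cup\calH_v$, witnessed by a nontrivial minimal $G_v$-tree $\cleave T_v$ with finite edge groups on which $\calE_v$ and $\calH_v$ act elliptically. Since every $G_e\in\calE_v$ fixes a vertex of $\cleave T_v$, I can equivariantly blow up the orbit of $v$ in $T$ to obtain a $G$-tree $S$ refining $T$; its edges are the finite-stabilizer edges from the translates of $\cleave T_v$ together with the two-ended edges inherited from $T$. The latter form a $G$-invariant subforest, so collapsing them yields a $G$-tree $S'$ all of whose edge groups are finite. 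One then checks that $S'$ is nontrivial --- the finite edges of $\cleave T_v$ survive the collapse because minimality forbids $G_v$ from fixing a point of $\cleave T_v$ --- and that each $H\in\calH$ is elliptic on $S'$: a vertex of $T$ fixed by $H$ outside the orbit of $v$ remains fixed, while if $H$ fixes a vertex in the orbit of $v$ then a suitable $H^g\cap G_v\in\calH_v$ fixes a point of $\cleave T_v\subset S$, and ellipticity is preserved under the collapse $S\to S'$. Hence $G$ is not one ended relative to $\calH$.

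The genuinely substantive input is Theorem \ref{thm:blowup}, which I am granting; beyond it the work is essentially bookkeeping. The main obstacle is matching the relative collection $\calH_v=\{H^g\cap G_v\}$ to the ellipticity assertions delivered by the theorem and by the blow-up/collapse operations, and in particular verifying that \emph{all} of $\calE_v$ --- not merely the single edge group $G_e$ named by the theorem --- is forced to act elliptically on $\cleave T_v$. The one point needing a genuine (if short) argument rather than routine verification is that the collapse in the converse direction does not trivialize the splitting, which I would deduce from minimality of the $G_v$-action on $\cleave T_v$.
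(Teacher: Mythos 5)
Your proof is correct and follows the paper's intended route: the substantive direction is the verbatim adaptation of the proof of Corollary \ref{cor:double-over-cyclic} via Theorem \ref{thm:blowup} (which is all the paper itself says about this corollary), and your converse is the standard blow-up-and-collapse refinement argument that the paper leaves implicit. The only point you should make explicit is where two-endedness actually enters: Theorem \ref{thm:blowup} by itself only guarantees that the edge groups of $\cleave T_v$ are vertex groups of an essential finite-edge-group splitting of $G_e$, and it is because the two-ended group $G_e$ admits only such splittings with \emph{finite} vertex groups that $\cleave T_v$ has finite edge groups.
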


Using the full strength of Theorem \ref{thm:main}, we will also
generalize a result of Swarup on the decomposition of free groups
\cite{Swarup-1986} to virtually free groups. This result was already
partially generalized by Cashen \cite{Cashen-virt-free} to
decompositions of virtually free groups with virtually cyclic edge
groups.

\begin{thm}\label{thm:virt-free}
  Let $G$ be finitely generated and virtually free.
  \begin{enumerate}
  \item\label{it:VF-A} If $G$ splits as an amalgamated free product
    $G=A*_CB$ with $C$ finitely generated and infinite then there is
    some $C_1 \dsfac C$ such that $C_1 \dsfac A$ or $C_1 \dsfac B$.
    
  \item\label{it:VF-H} If $G$ splits as an HNN extension $G=A*_{C,t}$
    with $C$ finitely generated and infinite, then there is an
    infinite subgroup $C_1\dsfac C$ and a splitting $\Delta$ of $A$ as
    a graph of groups with finite edge groups relative to $\{C_1,t^\mo
    C_1t\}$ such that either $C_1$ or $t^\mo C_1t$ is a vertex group
    of $\Delta$.
  \end{enumerate}
\end{thm}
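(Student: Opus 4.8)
The plan is to derive both statements from Theorem~\ref{thm:main}, using Theorem~\ref{thm:blowup} as the organising principle and the accessibility of $G$ to control termination. First I would record the two facts that make the machinery available. Since $G$ is finitely generated and virtually free it has a bound on the orders of its finite subgroups, so by Linnell's theorem (or Dunwoody's, since $G$ is also finitely presented) it is accessible; and since $C$ is infinite, $G$ is an infinite virtually free group, hence two ended or infinitely ended and in particular \emph{not} one ended relative to the empty family. Thus any $G$-tree with infinite edge groups meets the hypotheses of Theorem~\ref{thm:blowup}.

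For part~\ref{it:VF-A}, let $T$ be the Bass--Serre tree of $G=A*_CB$, whose edge stabilisers are the infinite conjugates of $C$. Applying Theorem~\ref{thm:blowup} yields a vertex $v\in\verts T$ and an incident edge $e$ with $G_v$ conjugate to $A$ or $B$ and $G_e$ conjugate to $C$, together with a minimal blow up $\cleave T_v$ on which $G_v$ acts so that $G_e$ is a vertex stabiliser and the edge stabilisers of $\cleave T_v$ are, up to conjugacy in $G_v$, the vertex groups of an essential finite-edge-group splitting of $G_e\cong C$. Conjugating so that $G_v=A$ (the case $G_v=B$ being symmetric) and $G_e=C$, any such vertex group $C_1$ of the essential splitting is by definition a vertex group of a finite-edge-group splitting of $C$, so $C_1\dsfac C$. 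The theorem is thus reduced to promoting $C_1$ from an \emph{edge} group of the infinite-edge $A$-tree $\cleave T_v$ to a \emph{vertex} group of some finite-edge-group $A$-tree, i.e. to proving $C_1\dsfac A$.

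This promotion is the crux, and it is here that I expect the main obstacle to lie: it is exactly the point where the full strength of Theorem~\ref{thm:main}, rather than its formal shadow Theorem~\ref{thm:blowup}, is needed. The naive move of iterating Theorem~\ref{thm:blowup} on $A\actson\cleave T_v$ only shrinks the infinite edge groups, producing a proper chain $C_1\cafsd C_1'\cafsd\cdots$ which by accessibility is finite but terminates in a \emph{finite} group, overshooting the infinite vertex group we want. Instead the plan is to feed in the carrier/non-crossing information packaged in Theorem~\ref{thm:main}: the minimal $C_1$-subtree inside the blow up can be taken not to cross its $A$-translates, so collapsing the $A$-orbit of this subtree is equivariant and produces a finite-edge-group $A$-tree in which $C_1$ is precisely a vertex stabiliser; accessibility of $A$ guarantees the collapse terminates at an honest finite graph of groups. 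Undoing the original conjugation then gives $C_1\dsfac A$ (or $C_1\dsfac B$), using that $\dsfac$ is visibly conjugation invariant.

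Part~\ref{it:VF-H} follows the same scheme applied to the Bass--Serre tree of $G=A*_{C,t}$, with one bookkeeping complication. Now the vertex $v$ with $G_v$ conjugate to $A$ is incident to two edges in a single $G$-orbit, whose stabilisers are conjugate to the two embedded copies $C$ and $t^\mo Ct$ of the edge group; I would arrange the splitting of the edge group supplied by Theorem~\ref{thm:blowup} to be essential with an \emph{infinite} vertex group, giving an infinite $C_1\dsfac C$ together with its translate $t^\mo C_1t$ recorded as elliptic data. Assembling the resulting finite-edge-group structure on $A$ yields the desired splitting $\Delta$ relative to $\{C_1,t^\mo C_1t\}$ with $C_1$ or $t^\mo C_1t$ a vertex group. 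The hard part is again the edge-to-vertex promotion of the previous paragraph, now to be carried out $t$-equivariantly so that both conjugates remain elliptic simultaneously; the non-crossing conclusion of Theorem~\ref{thm:main} is exactly what makes this simultaneous upgrade possible.
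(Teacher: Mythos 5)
Your reduction of part~\ref{it:VF-A} to the ``edge-to-vertex promotion'' of $C_1$ is the right diagnosis --- that is indeed the crux --- but the mechanism you propose for carrying it out does not work, and this is where the gap lies. Theorem~\ref{thm:main} has no ``non-crossing'' conclusion of the kind you invoke: its extra content beyond Theorem~\ref{thm:blowup} is that $G_e$ is a \emph{full vertex stabilizer} of $\cleave T_v$ (item~(\ref{it:Ge-stab})), that the edge groups of $\cleave T_v$ are exactly the vertex groups of the finite-edge splitting of $G_e$ (item~(\ref{it:cleave-edge-groups})), and that the remaining vertex groups are $\pdsfac G_v$ (item~(\ref{it:vertex-gps})). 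Your proposed move --- collapse the $A$-orbit of the minimal $C_1$-subtree of $\cleave T_v$ --- cannot produce ``a finite-edge-group $A$-tree in which $C_1$ is precisely a vertex stabiliser'': $C_1$ is an edge stabilizer of $\cleave T_v$, so collapsing its orbit only absorbs $C_1$ into a larger vertex group, while all the uncollapsed edges of $\cleave T_v$ retain their (generally infinite) stabilizers, which are the other conjugates of the $C_1$'s. Nothing finite appears. (Accessibility also plays no role in a single collapse of a cocompact tree; it is needed to bound an iteration, which your argument does not set up.)

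What the paper actually does is iterate rather than promote in one step. The structural content of Theorem~\ref{thm:main} is packaged as Corollary~\ref{cor:cleave-tree}: collapsing the distinguished edge $e$ of the blown-up tree realizes a \emph{cleaving} of the original one-edge splitting, producing a new one-edge splitting whose edge group is $\pdsfac C$ and whose vertex groups are $\dsfac A$ or $\dsfac B$. This is alternated with a ``first construction'' that blows vertex groups up until they are one-ended relative to their incident edge groups (which rules out case~(\ref{it:finite-edge-gp-case}) of Theorem~\ref{thm:main} at the next stage and so guarantees a strict descent $C \cafsd C_1' \cafsd C_2' \cafsd \cdots$). Accessibility forces this descent to stop, and the only way it can stop is that at some stage the first construction produces a blow-up in which a vertex group \emph{coincides with} an incident edge group; transitivity of $\dsfac$ then yields $C_i \dsfac C$ and $C_i \dsfac A$ (or $B$). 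The terminating condition is the theorem, not an input to a one-shot collapse. Your treatment of the HNN case inherits the same gap, plus the bookkeeping the paper handles explicitly: when a cleaving splits the single vertex orbit into two, one must recognize this as a finite-edge-group splitting of the previous vertex group relative to $\{C_1, t^\mo C_1 t\}$, which is how conclusion~\ref{it:VF-H} (rather than~\ref{it:VF-A}) is extracted.
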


Unlike in Swarup's proof, we do not use homological methods. Our proof
is more along the lines of the geometric arguments found in
\cite{Wilton-one-ended, Louder-III, Bestvina-Feighn-outer-limits,
  Diao-Feighn-free-split} using graphs of spaces $X$ with $\pi_1(X) =
G$. The presence of torsion, however, can make the attaching maps in
the graphs of spaces difficult to describe. By using the more abstract
$G$-cocompact core of the product of two $G$-trees
\cite{Guirardel-coeurs}, we sidestep these difficulties. The core has
been used before to study pairs of group splittings.In particular,
Fujiwara and Papasoglu in \cite{F-P-JSJ} use it to show the existence
of QH subgroups for one ended groups that have hyperbolic-hyperbolic
pairs of slender splittings; this is the main technicality in
constructing group theoretical JSJ decompositions. Although it could
be noted that the action of our group on the core gives rise to a
$G$-orbihedron à la \cite{Hae}, we will not need this machinery; in
fact, modulo classical Bass-Serre theory and Guirardel's Core Theorem
for simplicial trees, Theorem \ref{thm:core} (of which we sketch a
proof), our argument is self-contained.

\subsection{Acknowledgements}

I wish to thank John MacKay and Alessandro Sisto for asking me for a
proof of Corollary \ref{cor:double-over-cyclic}. I had actually even
given them what I thought to be a counterexample; their
countercounterexample gave me ample motivation to investigate this
problem further. This paper also would not have been possible without
everything I learned from Lars Louder. The ideas of Section
\ref{sec:preliminaries}, especially the usefulness of the Core
Theorem, arose from our discussions while working on strong
accessibility. I am also grateful for the meticulous work of the
anonymous referee who spotted many tiny mistakes, as well as a couple
embarrassing ones, and gave suggestions that substantially improved
the exposition.  Finally, I thank Inna Bumagin. This paper was written
while I was supported as a postdoctoral fellow by her NSERC grant.

\section{Preliminaries}\label{sec:preliminaries}

\subsection{Group actions}

All group actions will be from the left. Let $X$ be a $G$-set. If
$S\subset X$ is a subset, we will denote by $G_S$ the (setwise)
stabilizer $\{g \in G \mid gS = S\}$. If $S = \{x\}$ is a singleton,
then we will write $G_x$ instead of $G_{\{x\}}$. We call a subset
$S\subset X$ \define{$G$-regular} if for any $x,y \in S$ in the same
$G$-orbit there is some $g \in G_S$ such that $gx=y$. The following
lemma is immediate:

\begin{lem}\label{lem:regular}
  Let $X$ be a $G$-set. If $S\subset X$ is $G$-regular, then we have
  an embedding\[ G_S\backslash S \into G\backslash X.
\]
\end{lem}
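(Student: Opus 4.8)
The plan is to exhibit the one natural candidate map and verify that it is a well-defined injection; the hypothesis of $G$-regularity has been arranged precisely so that injectivity becomes automatic. I would define
$\phi \colon G_S\backslash S \to G\backslash X$ by sending the $G_S$-orbit of a point $x\in S$ to its full $G$-orbit $Gx \subset X$. Before anything else, I would record the preliminary observation that this even makes sense: by definition every $g\in G_S$ satisfies $gS=S$, so $G_S$ does act on $S$ and the orbit space $G_S\backslash S$ is legitimate.

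First I would check well-definedness. Since $G_S$ is a subgroup of $G$, any two points of $S$ lying in a common $G_S$-orbit automatically lie in a common $G$-orbit; hence the value $Gx$ is independent of the chosen representative $x$ of the class $G_S x$, and $\phi$ is a genuine function on $G_S\backslash S$.

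The only substantive step is injectivity, and it is here, and only here, that the hypothesis is invoked. Suppose $x,y\in S$ satisfy $\phi(G_S x)=\phi(G_S y)$, i.e.\ $Gx=Gy$, so that $x$ and $y$ lie in the same $G$-orbit. Because both points already belong to $S$, the $G$-regularity of $S$ supplies some $g\in G_S$ with $gx=y$, which says exactly that $G_S x=G_S y$. Thus $\phi$ is injective, giving the claimed embedding $G_S\backslash S \into G\backslash X$.

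I do not anticipate any real obstacle: all of the content has been front-loaded into the definition of $G$-regularity, so the verification is purely formal and the lemma is, as the statement advertises, immediate. The only point deserving a moment's attention is the preliminary remark that the setwise stabilizer $G_S$ genuinely preserves $S$, ensuring the domain $G_S\backslash S$ is meaningful; beyond that, well-definedness and injectivity are each a single line.
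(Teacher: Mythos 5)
Your proof is correct, and it is exactly the argument the paper has in mind: the paper states the lemma without proof, declaring it ``immediate,'' and the intended verification is precisely your map $G_S x \mapsto Gx$, with $G$-regularity supplying injectivity. Nothing is missing.
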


In this paper, all trees will be simplicial. In particular we will
consider them to be topological spaces, equipped with a CW-structure,
which also makes them into graphs. We further metrize these graphs by
viewing edges as real intervals of length 1.  We say a $G$-tree $T$ is
\define{without inversions} if, for any edge $e \in \edges T$, if
$ge=e$ then $g$ fixes $e$ pointwise. Equivalently, if $u,v \in \verts
T$ are the vertices at the ends of the edge $e$, then we have
inclusions \[G_u \geq G_e \leq G_v.\] We call vertex and edge
stabilizers, vertex groups and edge groups respectively. All $G$-trees
will be without inversions. We assume the reader is familiar with
Bass-Serre theory and we will switch freely between $G$-trees and
splittings as graphs of groups, viewing the two as being equivalent.

Let $T$ be a $G$-tree. $T$ is \define{essential} if every edge of $T$
divides it into two infinite components. We say a $G$-tree $T$ is
\define{without inversions} if, for any edge $e \in \edges T$, if
$ge=e$ then $g$ fixes $e$ pointwise.  We say that $T$ is
\define{minimal} if there are no proper subtrees $S\subset T$ with
$G_S=G$. We say $T$ is \define{cocompact} if $G\backslash T$ is
compact. An element $g$ or a subgroup $H$ of $G$ are said to
\define{act elliptically on $T$} if the groups $\bk g$ or $H$ fix some
$v \in \verts T$.

\subsection{Products of trees, cores, and leaf spaces}
If $T_1$ and $T_2$ are $G$-trees, then we have a natural
induced action $G\actson T_1 \times T_2$. Since the trees
$T_1,T_2$ are 1 dimensional CW complexes, we may consider their product
$T_1 \times T_2$ as a \define{square complex}, i.e. a 2 dimensional CW
complex whose cells consist of vertices, edges, and squares. There are
natural projections $p_i:T_1 \times T_2 \onto T_i$.  The following
Lemma is immediate:

\begin{lem}\label{lem:prod-no-inv}
  If the actions $G \actson T_1$ and $G\actson T_2$ are without
  inversions, then so is the action $G \actson T_1 \times T_2$,
  i.e. if $\sigma \supset \epsilon$ is an inclusion of cells
  (e.g. a square containing an edge), then $G_\sigma \leq
  G_\epsilon$.

  If the collection of subgroups $\calH$ act elliptically on $T_1$ and
  $T_2$ then each subgroup of $\calH$ fixes a vertex of $T_1\times
  T_2$.
\end{lem}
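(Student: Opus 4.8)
The plan is to exploit the product cell structure: every cell of the square complex $T_1\times T_2$ is a product $\sigma_1\times\sigma_2$, where $\sigma_i$ is a cell (a vertex or an edge) of $T_i$, and the action is diagonal, $g\cdot(x_1,x_2)=(gx_1,gx_2)$. The one structural fact I would establish first is that setwise stabilizers of product cells factor as intersections. Since the projections $p_i\colon T_1\times T_2\onto T_i$ are $G$-equivariant and satisfy $p_i(\sigma_1\times\sigma_2)=\sigma_i$, applying $p_i$ to the equation $g(\sigma_1\times\sigma_2)=\sigma_1\times\sigma_2$ yields $g\sigma_i=\sigma_i$; conversely, a $g$ fixing each $\sigma_i$ setwise fixes their product. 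Hence $G_{\sigma_1\times\sigma_2}=G_{\sigma_1}\cap G_{\sigma_2}$.

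For the first assertion I would note that a face relation $\epsilon\subset\sigma$ in $T_1\times T_2$ is nothing but a pair of factorwise face relations: writing $\sigma=\sigma_1\times\sigma_2$ and $\epsilon=\epsilon_1\times\epsilon_2$, we have $\epsilon\subseteq\sigma$ precisely when $\epsilon_i\subseteq\sigma_i$ for $i=1,2$, where each $\epsilon_i$ is either $\sigma_i$ itself or, when $\sigma_i$ is an edge, an endpoint of that edge. Because $G\actson T_i$ is without inversions, each factor inclusion gives $G_{\sigma_i}\leq G_{\epsilon_i}$: this is trivial when $\epsilon_i=\sigma_i$, and when $\sigma_i$ is an edge with endpoint $\epsilon_i$ it is exactly the inclusion $G_{\sigma_i}\leq G_{\epsilon_i}$ supplied by the no-inversion hypothesis. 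Intersecting over the two factors, $G_\sigma=G_{\sigma_1}\cap G_{\sigma_2}\leq G_{\epsilon_1}\cap G_{\epsilon_2}=G_\epsilon$, which is the desired conclusion.

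For the second assertion, each $H\in\calH$ acts elliptically on $T_1$, so it fixes some vertex $v_1\in\verts{T_1}$, and likewise fixes some $v_2\in\verts{T_2}$. Then for every $h\in H$ we have $h(v_1,v_2)=(hv_1,hv_2)=(v_1,v_2)$, so $H$ fixes the vertex $(v_1,v_2)$ of $T_1\times T_2$. I do not expect a genuine obstacle here: the whole argument is formal, and the only points that require any care are the two structural observations that a face of a product cell is a product of faces and that the stabilizer of a product cell is the intersection of the factor stabilizers. Once these are in place, both the no-inversion property and ellipticity transfer to the product automatically.
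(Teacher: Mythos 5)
Your argument is correct; the paper offers no proof (it declares the lemma ``immediate''), and your factorization of cells as products $\sigma_1\times\sigma_2$ with $G_{\sigma_1\times\sigma_2}=G_{\sigma_1}\cap G_{\sigma_2}$ is exactly the intended justification. Both the face-inclusion step and the ellipticity step check out as written.
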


The action $G \actson T_1\times T_2$ is not cocompact in general. It
turns out, however, that we can extract a useful subset, namely
Guirardel's cocompact core. We state the special case of his result
applied to simplicial trees.

\begin{thm}[The Core Theorem {\cite[c.f. Théorème principal and
    Corollaire 8.2]{Guirardel-coeurs}}]\label{thm:core}
  
  Let ${G\actson T_1}$, ${G\actson T_2}$ be two minimal actions of a
  finitely generated group $G$ on simplicial trees $T_1, T_2$ with
  finitely generated edge stabilizers. Suppose furthermore that
  $T_1,T_2$ do not equivariantly collapse to a common non-trivial tree.

  Then there is a $G$-invariant subset $\calC \subset T_1\times T_2$
  called the \define{core of the action} $G\actson T_1\times T_2$
  which is defined as the smallest connected $G$-invariant subset such
  that the restrictions of the projections $p_i|\calC:\calC \onto T_i$
  have connected fibres. The quotient $\calS = G\backslash \calC$ is
  compact.
\end{thm}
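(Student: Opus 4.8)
The plan is to follow Guirardel and construct the core $\calC$ explicitly, then verify its characterizing properties, leaving cocompactness of the quotient as the substantial step. First I would fix a basepoint $* = (*_1,*_2)\in T_1\times T_2$ and introduce \emph{quadrants}: for a half-tree $\delta_1\subseteq T_1$ (a connected component of the complement of the midpoint of an edge of $T_1$) and a half-tree $\delta_2\subseteq T_2$, the product $Q=\delta_1\times\delta_2$ is a quadrant. I would then classify each quadrant as \emph{light} or \emph{heavy} according to whether the orbit $G\cdot *$ penetrates $Q$ only to bounded depth or, instead, to arbitrarily large depth in both coordinates simultaneously. The core is defined as the complement $\calC=(T_1\times T_2)\setminus\bigcup\{Q : Q \text{ light}\}$ of the union of the open light quadrants; equivalently, $(x_1,x_2)\in\calC$ precisely when it lies in no light quadrant.

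With the construction in hand, the next step is to establish the defining properties. $G$-invariance is immediate, since $g\in G$ permutes quadrants and preserves the heavy/light dichotomy. For connectedness of the fibres of $p_i|\calC$, I would examine a fibre $p_1^{-1}(x_1)\cap\calC$: it is obtained from the subtree $\{x_1\}\times T_2$ by deleting the slices $\{x_1\}\times\delta_2$ coming from light quadrants $\delta_1\times\delta_2$ with $x_1\in\delta_1$, and one checks that these deletions remove complementary subtrees in a nested fashion, so that a subtree remains. Connectedness of $\calC$ itself and its minimality among connected $G$-invariant subsets with connected fibres then follow from the same quadrant bookkeeping: any connected $G$-invariant set with connected fibres must contain every point that is not cut off by a light quadrant, which is exactly $\calC$. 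The hypothesis that $T_1$ and $T_2$ admit no common equivariant collapse enters here to guarantee nondegeneracy --- that $\calC$ genuinely carries the two-dimensional square structure of $T_1\times T_2$ rather than collapsing onto a diagonal subtree --- so that the clean statement of the theorem applies.

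I expect the main obstacle to be cocompactness of $\calS=G\backslash\calC$, which is also where the finite-generation hypotheses on $G$ and on the edge stabilizers are essential (note that the $T_i$ themselves are not assumed cocompact). The strategy is to bound the number of $G$-orbits of squares $e_1\times e_2\subseteq\calC$. A square survives in the core only when both edges $e_1$ and $e_2$ are ``seen from all sides'' by the orbit of $*$, a condition one detects along geodesic paths in $T_1\times T_2$ joining $*$ to translates $g*$. Fixing a finite generating set $S$ of $G$, one controls such paths by the finitely many letters $s\in S$, while finite generation of the edge stabilizers bounds the complexity of the fibres transverse to each projection; assembling these bounds yields finitely many surviving orbits of squares and hence compactness of $\calS$. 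The cleanest way to organize this counting is to identify the square-volume of $\calS$ with Guirardel's intersection number $i(T_1,T_2)$ and to prove that this number is finite, the finiteness again resting on finite generation. I anticipate that making the ``seen from all sides'' condition precise and carrying out the orbit count is the technical heart of the argument, and I would devote the bulk of the proof to it.
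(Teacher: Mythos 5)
Your proposal is essentially a sketch of Guirardel's original argument, and it is a legitimate route to the statement, but it is genuinely different from the one the paper sketches. You build the core \emph{from above}: define light and heavy quadrants $\delta_1\times\delta_2$, set $\calC=(T_1\times T_2)\setminus\bigcup\{Q:Q\text{ light}\}$, get $G$-invariance and the minimality characterization almost by construction, and then face cocompactness of $G\backslash\calC$ as the hard step (which you correctly identify but do not actually carry out; bounding the orbits of squares via the finiteness of the intersection number is a substantial argument in \cite{Guirardel-coeurs}, not a routine count). The paper instead builds the core \emph{from below} as a resolution/folding sequence: start with a $G$-orbit of a vertex, add finitely many orbits of edges (finite generation of $G$) to get a connected $G$-complex $\calC_1$, observe that disconnectedness of the $p_i$-fibres is exactly the failure of injectivity of the leaf-space maps $\calL_i\onto T_i$, and then use Lemma \ref{lem:collapse-cleave} in reverse: each added orbit of squares performs a fold of one leaf space, and finite generation of the edge stabilizers guarantees the folding sequence terminates with $\calL_i=T_i$. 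The trade-off is instructive: the paper's approach gets cocompactness for free (only finitely many orbits of cells are ever added) but leaves the ``smallest connected invariant subset with connected fibres'' characterization implicit, whereas yours gets the canonical definition and minimality cheaply but must then prove cocompactness, which is where all the analytic content migrates. Your remark about where the no-common-collapse hypothesis enters is slightly off target: its role in the paper is to verify the hypotheses under which Guirardel's theorem yields a nonempty connected core at all (two compatible trees can have an empty or degenerate core), not merely to rule out a ``diagonal'' degeneration of the square structure.
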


Suppose for the rest of this section that $T_1,T_2$ satisfy the
hypotheses of Theorem \ref{thm:core}.  The restrictions of the
projections $p_i|_\sigma: \sigma \to T_i$ are well defined for each
cell (i.e. a vertex, edge, square) $\sigma \subset T_1\times T_2$. If
$\sigma$ is a square then the projection is onto an edge $p_i(\sigma)
\in \edges T_i$. If $\lambda_1,\lambda_2 \subset \sigma$ are two
fibers of such a projection, see Figure \ref{fig:fibers}
\begin{figure}[h]
  \centering
  \begin{tikzpicture}
  \draw[pattern=horizontal lines] (0,0) -- (0,1) -- (1,1) -- (1,0)
  --cycle;
  \draw[->] (1.5,0.5) -- node[above]{$p_i$} (2.5,0.5);
  \draw (3,0) -- (3,1);
  \draw[very thick] (0,1) -- (1,1) (0,0) -- (1,0);
  \sbvert{3,0}
  \sbvert{3,1}
\end{tikzpicture}
  \caption{The projection of a square on an edge and some of its fibers}
  \label{fig:fibers}
\end{figure}
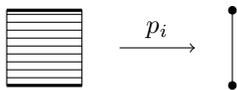, we can define a distance
$d^\sigma_i(\lambda_1,\lambda_2)$ to be the distance in $p_i(\sigma)$
between the points $p_i(\lambda_1)$ $p_i(\lambda_2)$; thus putting a
metric $d^\sigma_i$ on the set of $p_i$-fibers in a cell $\sigma$. We
now define the \define{$i$-leaf space $\calL_i$} of a subset $Z
\subset T_1\times T_2$ to be the set of connected unions of $p_i$-fibers
of cells in $Z$, called \define{leaves}, so that we see $Z$ as being
\emph{foliated} by the leaves in $\calL_i$. $\calL_i$ is a 1-complex
with metrized edges; therefore we can endow $\calL_i$ with the path
metric $d_i$. As a consequence of the direct product structure we have
the following.

\begin{lem}\label{lem:leaf-spaces-forests}
  If $Z \subset T_1\times T_2$, then the leaf spaces $\calL_i$ are
  forests (see Figure \ref{fig:leaf-space}).
\end{lem}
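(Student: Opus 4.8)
The plan is to produce a natural simplicial map $\pi_i \colon \calL_i \to T_i$ and to show it is a \emph{local isometry} (equivalently, an immersion of graphs); the forest property then drops out because a tree admits no reduced loops. First I would record the map: a leaf $\lambda$ is a connected union of $p_i$-fibres, all of which have the same image $p_i(\lambda)\in T_i$, so setting $\pi_i(\lambda)=p_i(\lambda)$ is well defined. By the very definition of the transverse metric $d_i^\sigma$, the set of $p_i$-fibres inside a square $\sigma$ is isometric to the edge $p_i(\sigma)\in\edges{T_i}$; hence $\pi_i$ carries each edge of $\calL_i$ isometrically onto an edge of $T_i$ and is $1$-Lipschitz for the path metric $d_i$. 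It then remains to see that $\pi_i$ does not fold, i.e.\ that distinct edge-germs of $\calL_i$ at a vertex are sent to distinct edge-germs of $T_i$.

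Local injectivity is where the direct product structure enters, and I expect it to be the main point. The key observation is that over the interior of a single edge $e\in\edges{T_i}$ the complex splits as a product: since $Z$ is a subcomplex of $T_1\times T_2$ and $p_i$ is the product projection, $Z\cap p_i^{-1}(\mathring e)=\mathring e\times Y_e$ for a subcomplex $Y_e$ of the other tree. Consequently the leaves lying over $\mathring e$ are exactly the constant slices $\{x\}\times K$ with $K$ a connected component of $Y_e$, so they assemble into one edge of $\calL_i$ per component of $Y_e$, and no two of them can merge \emph{along} $e$. I would then argue that, along a fixed edge-direction at a vertex $w$, the product splitting forces the germ of a leaf to be determined by its component of $Y_e$, so two edges of $\calL_i$ pointing in the same direction $e$ cannot emanate from a common vertex-leaf; this is precisely the statement that $\pi_i$ is locally injective. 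Here Lemma \ref{lem:prod-no-inv} (no inversions on $T_1\times T_2$) is used to keep the cell incidences under control.

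Granting local injectivity, the conclusion is formal: an immersion into a tree sends reduced edge-paths to reduced edge-paths, and a tree contains no reduced closed path, so $\calL_i$ contains no reduced loop and is therefore a disjoint union of trees. The hard part will be the vertex analysis in the second paragraph — controlling how the bands coming from the various edges incident to a single vertex interact inside one vertex-fibre — since for a \emph{general} square complex such interactions could create loops; it is exactly the product form $\mathring e\times Y_e$ over edges, rather than arbitrary attaching, that rules this out and makes $\pi_i$ an immersion.
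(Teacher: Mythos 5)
Your reduction to showing that $\pi_i\colon\calL_i\to T_i$ is an immersion is a sensible strategy, and three of its four steps (well-definedness of $\pi_i$, the isometric identification of each edge of $\calL_i$ with an edge of $T_i$, and the implication ``immersion into a tree $\Rightarrow$ forest'') are fine; but the local injectivity claim in your second paragraph is exactly where the argument breaks, and it cannot be repaired for arbitrary $Z$. The splitting $Z\cap p_i^{-1}(\mathring{e})=\mathring{e}\times Y_e$ only controls what happens over the \emph{open} edge; it says nothing about the fibre $Z\cap p_i^{-1}(v)=\{v\}\times W_v$ over an endpoint $v$ of $e$, and $W_v$ may strictly contain $Y_e$ and join two components $K_1,K_2$ of $Y_e$ by cells of $Z$ that are not faces of any square of $Z$ lying over $e$. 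When that happens, the edges of $\calL_i$ determined by $K_1$ and $K_2$ emanate from the same vertex-leaf in the same direction $e$, so $\pi_i$ folds. Concretely, take $e=[u,v]\in\edges{T_1}$, $f\in\edges{T_2}$ and $Z=\partial(e\times f)$, the four boundary edges of the square $e\times f$: each vertical edge $\{u\}\times f$, $\{v\}\times f$ is a single $p_1$-fibre and hence a single vertex of $\calL_1$, while the two horizontal edges are $1$-transverse and give two edges of $\calL_1$ joining those same two vertices. So $\calL_1$ is a bigon, i.e.\ a circle, not a forest. This shows not only that $\pi_i$ need not be an immersion but that the statement requires a tacit hypothesis on $Z$ beyond $Z\subset T_1\times T_2$.

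For comparison: the paper offers no proof of this lemma at all --- it is asserted as ``a consequence of the direct product structure'' --- so there is no argument to match yours against; the example above shows the bare product structure is not enough. In every application in the paper, $Z$ is obtained from Guirardel's core $\calC$, whose leaf spaces \emph{are} the trees $T_i$, by equivariantly collapsing transverse free faces and detaching whole hypercarriers, and for such $Z$ the forest property follows from Lemma \ref{lem:collapse-cleave}: collapsing an $i$-transverse free face leaves $\calL_i$ unchanged and only cleaves $\calL_j$, and a cleaving of a forest is a forest. If you want a clean general statement, either add the hypothesis that the vertex fibres of $Z$ do not connect distinct components of the adjacent edge fibres (which is precisely what restores your immersion argument), or restrict to the complexes actually produced in Section \ref{sec:main}.
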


\begin{figure}[h]\centering
  \begin{tikzpicture}[scale=0.5]
  \draw[very thick, pattern=horizontal lines] (0,0) -- (0,2) -- (2,2) -- (2,0) --cycle;
  \draw[very thick,pattern=horizontal lines] (2,0) -- (2,2) -- (4,2) -- (4,0) --cycle;
  \draw[very thick, pattern=horizontal lines] (2,2) -- (4,2) -- (6,3)
  -- (4,3) -- cycle;
  \draw[fill=white](2,2) -- (1,3) -- (3,3) -- (4,2) --cycle;
  \draw[very thick,pattern=horizontal lines] (2,2) -- (1,3) -- (3,3)
  -- (4,2) --cycle;
  \draw[very thick] (7,3) -- (8,2) -- (8,0) (8,2) --(9,3);
  \bvert{7,3}
  \bvert{8,2}
  \bvert{8,0}
  \bvert{9,3}
\end{tikzpicture}
\caption{The $i$-leaves in a square complex and the resulting leaf
  space, which is a tree.}
\label{fig:leaf-space}
\end{figure}
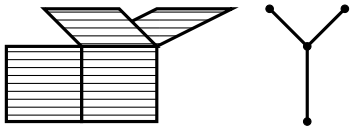

If $\calC \subset T_1\times T_2$ is a core then the leaf spaces
$\calL_i$ are homeomorphic to the trees $T_i$. Later, however, we will
be performing operations that will alter the leaf spaces.

\subsection{Induced splittings}

Let $v \in \verts{T_i}, e\in \edges{T_i}$ and let $m_e$ be the
midpoint of $e$. Let $\tau_v = p_i^{\mo}(\{v\})\cap \calC$ and $\tau_e
= p_i^{\mo}(\{m_e\})\cap \calC$. By Theorem \ref{thm:core} the preimages
$\tau_v,\tau_e$ are connected and are therefore leaves in $\calL_i$.

Since we have an action $G \actson \calC$, since $\tau_v,\tau_e$ are
defined as $T_i$-point preimages via a $G$-equivariant map, and since
$G_v,G_e$ are exactly the stabilizers of these points $v, m_e$, the
subsets $\tau_v,\tau_e \leq \calC$ are $G$-regular so by Lemma
\ref{lem:regular} we have embeddings\[ G_v\backslash \tau_v \into
G\backslash \calC \hookleftarrow G_e\backslash \tau_e.
\] By Theorem \ref{thm:core}, $G\backslash \calC$ is compact so
the quotients $G_v\backslash \tau_v, G_v\backslash \tau_v$ must be as
well. Moreover, because $\tau_v,\tau_e$ are contained in $p_i$-fibres,
for $j \neq i$ the restrictions\[ p_j|\tau_v:\tau_v \to T_j,
p_j|\tau_e:\tau_e \to T_j
\] are injective. Finally, the projection $p_j|_\calC:\calC \onto T_j$ is
$G$-equivariant; we have shown the following.

\begin{lem}\label{lem:induced-splittings}
  If $v \in \verts{T_i}, e\in \edges{T_i}, j\neq i$, then the fibres
  $\tau_v, \tau_e$ are mapped injectively via $p_j$ to subtrees that
  are $G_v,G_e$-invariant (respectively). Viewed as subsets of the
  core $\calC \subset T_1\times T_2$, $\tau_v$ and $\tau_e$ coincide
  with their $j$-leaf spaces.

  The actions $G_e\actson \tau_e, G_v\actson \tau_v$ are cocompact.
  Moreover $\tau_v,\tau_e$ are infinite if and only if the actions of
  the subgroups $G_v\actson T_j,G_e\actson T_j$ are without global
  fixed points.
\end{lem}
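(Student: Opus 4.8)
The plan is to read the three structural assertions straight off the discussion immediately preceding the statement, and to isolate the ``infinite if and only if'' dichotomy as the one clause needing a genuine argument. For the structural part, injectivity of $p_j|_{\tau_v}$ is immediate: $\tau_v$ lies in the single $p_i$-fibre $p_i^{\mo}(v)$, and a point of $T_1\times T_2$ is determined by its two coordinates, so any two points of $\tau_v$ with the same $p_j$-image coincide (identically for $\tau_e$ over $m_e$). Since $\tau_v$ is connected by Theorem \ref{thm:core}, its continuous image $Y_v:=p_j(\tau_v)$ is a connected subgraph of the tree $T_j$, hence a subtree; it is $G_v$-invariant because any $g\in G_v$ fixes $v$ and therefore preserves $\tau_v=p_i^{\mo}(v)\cap\calC$ (using $G$-equivariance of $p_i$ and $G$-invariance of $\calC$), while $p_j$ is $G$-equivariant. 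For the leaf-space claim I would note that injectivity of $p_j$ on $\tau_v$ forces every $p_j$-fibre to meet $\tau_v$ in at most a point, so the $j$-leaves of $\tau_v$ are singletons and the $j$-leaf space is $\tau_v$ itself. Cocompactness of $G_v\actson\tau_v$ and $G_e\actson\tau_e$ is exactly what the $G$-regularity argument just before the statement yields: $G_v\backslash\tau_v$ embeds by Lemma \ref{lem:regular} as a subcomplex of the finite complex $G\backslash\calC$, hence is compact.

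For the dichotomy I would work throughout via the identification $\tau_v\cong Y_v$ with the $G_v$-invariant subtree $Y_v\subseteq T_j$. One implication is essentially free. If $G_v\actson T_j$ has no global fixed point then a fortiori it has none on the subtree $Y_v\subseteq T_j$; but any inversion-free action on a \emph{finite} tree fixes a vertex (its centre), so $Y_v$ cannot be finite, i.e.\ $\tau_v$ is infinite. The same applies verbatim to $\tau_e$ with $G_e$ in place of $G_v$.

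The hard part is the contrapositive of the other implication: if $G_v$ fixes a vertex $w\in T_j$, then $\tau_v$ is finite. I want to emphasise that cocompactness \emph{alone} does not suffice here---an infinite star, on whose automorphism group acts cocompactly while fixing the centre, is a bounded-but-infinite cocompact fibre that is a priori conceivable---so the argument must use the defining minimality of the core rather than the softer facts above. Concretely, I would identify $Y_v=p_j(\tau_v)$ with the minimal $G_v$-invariant subtree $\mathrm{Min}(G_v\actson T_j)$. The inclusion $Y_v\supseteq\mathrm{Min}(G_v)$ is automatic, since every non-empty invariant subtree contains the minimal one; the content is the reverse inclusion, namely that no point of $\calC$ lying over $v$ sits on a ``fin'' outside $\mathrm{Min}(G_v)$. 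This I would extract from Guirardel's explicit construction of $\calC$ as the complement of the light quadrants \cite{Guirardel-coeurs}: the quadrant based at such an outward vertex and pointing away from $\mathrm{Min}(G_v)$ is light, so its apex is excluded from the core. Granting $Y_v=\mathrm{Min}(G_v)$, the dichotomy is immediate: a minimal inversion-free action has minimal subtree a single vertex precisely when there is a global fixed point, and otherwise the minimal subtree has no valence-one vertices and is therefore infinite. The edge case is identical. I expect this reverse inclusion---showing that the core does not overshoot the minimal subtree---to be the \textbf{main obstacle}, as it is exactly the step where the minimality built into the Core Theorem, and not mere cocompactness, does the work.
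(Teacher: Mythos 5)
Your structural assertions --- injectivity of $p_j$ on the fibres, $G_v$- and $G_e$-invariance of the images, the identification with the $j$-leaf spaces, and cocompactness via $G$-regularity and Lemma \ref{lem:regular} --- reproduce exactly the paper's argument, which is the discussion immediately preceding the statement. The problem is the step you yourself single out as the main obstacle: you propose to prove that $p_j(\tau_v)$ equals the minimal invariant subtree $\mathrm{Min}(G_v \actson T_j)$, and this identity is false. The paper says so explicitly and builds a construction around the failure: at the start of the proof of Theorem \ref{thm:main} it is observed that ``it may be that for some $e \in \edges{T_\infty}$ the $G_e$-trees $\tau_e$ are not minimal,'' and the shaved core $\calC_s'$ is introduced precisely to collapse the resulting spurs. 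The connected-fibre requirement in the definition of the core can force the fibre over $v$ to contain points (for instance fixed points of incident edge stabilizers) lying outside $\mathrm{Min}(G_v \actson T_j)$, so the core genuinely overshoots the minimal subtree, and an argument resting on that equality cannot be repaired as stated.

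Fortunately the direction you are worried about does not need it. The sense in which the lemma is meant and used is ``of infinite diameter'': in the proof of Theorem \ref{thm:main} the conclusion drawn from ellipticity of the finite groups $G_k$ is that the trees $\tau_k$ have finite diameter, ``by cocompactness and regularity.'' The argument is the rooted-tree argument from the proof of Lemma \ref{lem:spur-removal}: if $G_v$ fixes $w \in T_j$, the nearest-point projection $w'$ of $w$ onto the invariant subtree $Y_v = p_j(\tau_v)$ is fixed by $G_v$, so $G_v$ preserves the distance to $w'$; cocompactness gives finitely many edge orbits, each lying at a fixed distance from $w'$, so $Y_v$ is bounded. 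Conversely, a bounded invariant subtree has a centre, which (the action being without inversions) yields a global fixed point, so an action without global fixed point has unbounded fibre. Your infinite-star scenario concerns cardinality rather than diameter; it is not what the lemma is used for, and in the one place where literal finiteness is invoked the stabilizer $G_k$ is a finite group, so cocompactness already forces the fibre to be a finite complex. In short: the soft parts match the paper, but the dichotomy should be settled by the centre/rooted-tree argument rather than by an appeal to minimality of the core, and the specific identity you lean on is contradicted by the paper's own Section \ref{sec:main}.
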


The $G_v,G_e$-trees $\tau_v,\tau_e$ give splittings induced by the
action on $T_j$. The blowups of Theorem \ref{thm:main} will be
obtained by modifying the trees $\tau_v$. For aficionados of CAT(0)
cube complexes, it is worth remarking that the core $\calC$ is a
CAT(0) square complex, in fact a $\calV\calH$-complex, and that the set
of fibres $\tau_e, e \in \edges{T_i}$ is the set of hyperplanes.

\subsection{Spurs, free faces, and cleavings}

In the previous section we obtained cocompact $G_v,G_e$-trees
$\tau_v,\tau_e$. We say a tree has a \define{spur} if it has a vertex
of degree 1. An edge adjacent to a spur is called a \define{hair}. We
now give a shaving process.

\begin{lem}\label{lem:spur-removal}
  Let $T$ be a cocompact $G$-tree. $T$ is minimal if $T$ doesn't have
  any spurs. If $T$ is not minimal, then we can obtain the minimal
  subtree $T(G)$ as the final term of a finite sequence\[ T=T_0,
  \ldots, T_k = T(G),
  \] where $T_{i+1}$ is obtained from $T_i$ by $G$-equivariantly
  contracting one $G$-orbit of hairs to points.
\end{lem}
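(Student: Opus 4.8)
The plan is to prove Lemma~\ref{lem:spur-removal} by first establishing the two claims separately: that absence of spurs forces minimality, and that when spurs are present the shaving process terminates at the minimal subtree. I would begin with the characterization of minimality. Recall that $T$ is minimal means there is no proper $G$-invariant subtree carrying the full group. The standard fact is that the minimal subtree $T(G)$ is the union of all axes of hyperbolic elements (together with fixed points, in the elliptic case), equivalently the smallest $G$-invariant subtree. I would argue the contrapositive of the first claim: if $T$ is \emph{not} minimal, then $T(G) \subsetneq T$, and I want to produce a spur. Since $T$ is cocompact and $T(G)$ is a proper $G$-invariant subtree, there is a vertex $w \in \verts{T} \setminus \verts{T(G)}$; take such a $w$ at maximal distance-from-$T(G)$ considerations, or more cleanly, consider a vertex $w$ outside $T(G)$ that is extremal in the forest $T \setminus T(G)$. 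Because $T$ is a tree and $T(G)$ is connected, each component of the complement is a subtree attached along a single edge, and within such a component a leaf (degree-1 vertex) of $T$ must exist — this uses that the component, being a finite-diameter-per-orbit piece by cocompactness, cannot be an infinite ray without contradicting minimality of the union-of-axes description. This yields a spur, proving the contrapositive.

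\medskip

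Next I would handle the shaving sequence itself. The key observation is that $G$-equivariantly contracting one orbit of hairs is well-defined precisely because the action is without inversions and cocompact: cocompactness guarantees finitely many $G$-orbits of edges, so there are finitely many orbits of hairs, and the set of spurs is $G$-invariant (a spur maps to a spur under the action), so contracting an entire orbit of hairs at once preserves equivariance and keeps $T_{i+1}$ a $G$-tree. At each stage $T_{i+1} \subset T_i$ is again cocompact, so Lemma remains applicable inductively. The termination argument is the crux: I would count $G$-orbits of edges. Each hair-contraction strictly decreases the (finite) number of edge orbits of the tree, since at least one orbit of edges — the orbit of hairs — is removed and no new edge orbits are created. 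Hence after at most $k = |G \backslash \edges{T}|$ steps the process halts, and it can only halt at a tree with no spurs.

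\medskip

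Finally I would tie the two parts together: the process halts at some $T_k$ having no spurs, and by the first claim a cocompact spur-free $G$-tree is minimal; moreover every $T_i$ has the same minimal subtree, since contracting hairs (which lie outside $T(G)$ by the extremality argument above) does not disturb any axis of a hyperbolic element nor any fixed point realizing minimality. Therefore $T_k = T(G)$, the minimal subtree, as claimed.

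\medskip

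The main obstacle I anticipate is the first claim's extremal-vertex argument: I must rule out the possibility that the complement $T \setminus T(G)$ contains an infinite ray with no degree-1 vertex, which would block the existence of a spur. The resolution is exactly cocompactness together with the fact that vertices off $T(G)$ cannot lie on any axis; an infinite ray disjoint from $T(G)$ would, by the pigeonhole principle applied to the finitely many edge orbits, force a hyperbolic element whose axis leaves $T(G)$, contradicting that $T(G)$ contains all axes. Making this pigeonhole-to-hyperbolic-element step fully rigorous — essentially recovering that a cocompact $G$-tree with a long embedded segment in a single complementary component yields a hyperbolic translation along it — is where I expect the real work to lie.
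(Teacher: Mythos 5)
Your overall skeleton is right, and your termination argument is actually simpler than the paper's: contracting one $G$-orbit of hairs deletes exactly one $G$-orbit of edges and creates none, so the process halts after at most $|G\backslash \edges{T}|$ steps at a spur-free (hence minimal) tree, and since hairs never lie in $T(G)$ each contraction preserves the minimal subtree. That part is fine.

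Where you diverge from the paper, and where the real issue lies, is in producing a spur in a non-minimal cocompact tree. You propose to describe $T(G)$ as the union of axes and to rule out an infinite ray in a complementary component by pigeonholing the finitely many edge orbits to manufacture a hyperbolic element whose axis leaves $T(G)$. You flag this yourself as the hard step, and the difficulty is genuine: if $e$ and $ge$ both lie on the ray, $g$ need not be hyperbolic --- an elliptic element can carry one edge of a ray to another with \emph{reversed} orientation. You must extract a \emph{coherently oriented} pair (e.g.\ by taking $2N+1$ edges of the ray, where $N$ is the number of edge orbits) and then prove that such a pair forces $g$ to be hyperbolic with axis containing the segment between the two edges; none of that is in your write-up, and the union-of-axes description also degenerates when $G$ is elliptic on $T$. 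The paper sidesteps all of this with a more elementary argument: take any proper $G$-invariant subtree $S$, let $K$ be the closure of a complementary component with attaching vertex $v$; then $G_K \leq G_v$, so every element of $G_K$ preserves the distance-to-$v$ function on $K$, and cocompactness together with Lemma \ref{lem:regular} (the embedding $G_K\backslash K \into G\backslash T$) forces $K$ to have finite radius, hence a non-root valence-one vertex, which is a spur of $T$. I would replace your pigeonhole step with this root-distance argument; the remainder of your proof then goes through as written.
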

\begin{proof}
  Let $v \in \verts T$ be a spur adjacent to an edge $e \in \edges T$
  and let $u \in \verts T$ be the other endpoint of $e$. The map $T\to
  T$ obtained by $G$-equivariantly collapsing $ge$ onto $gu; g \in G$
  is a deformation retraction onto a proper $G$ invariant subtree, so
  $T$ is not minimal.

  Suppose now that $T$ is not minimal. Then there is some proper
  $G$-invariant subtree $S \subset T$. Let $K$ be the closure of some
  connected component of $T \setminus S$. Then $K \cap S = \{v\}$ for
  some $v \in \verts S$. Since $S$ is $G$-invariant and connected, we
  must have $G_K \leq G_v$. It follows that for any $w \in \verts K$
  and any $g \in G_K$ the distance $d_T(w,v) = d_T(gw,v)$, i.e. the
  action of $G_K$ on $K$ is the action on a rooted tree with root
  $v$. Since $K$ is $G$-regular, we have an embedding $G_K\backslash
  K\into G\backslash T$ which is compact; thus $K$ must have finite
  radius since $G_K$ preserves distances from the root.
  
  Since $K$ is a rooted tree with finite diameter it must have a
  non-root vertex of valence 1. By the argument at the beginning of
  the proof we can $G_K$-equivariantly collapse hairs and since $G_K
  \actson K$ is cocompact, after finitely many collapses we will have
  collapsed $K$ to $v$. Again since $G\actson T$ is cocompact, there
  are only finitely many orbits of connected components of $T\setminus
  S$, so the result follows.
\end{proof}

If $\sigma$ is a square in some $Z \subset T_1\times T_2$, then we say
an edge $\epsilon \subset \sigma$ is a \define{free face} if it only
contained in one square. The following terminology is due to Wise
\cite{Wise-cubulate-small-canc}.

\begin{defn}\label{defn:hypercarrier}
  Let $e \in \edges{T_i}$ and let $\tau_e \subset \calC$ be the fibre
  mentioned in Lemma \ref{lem:induced-splittings}. The
  \define{hypercarrier} $\hcarrier{\calC}{\tau_e}$ is the union of
  squares of $\calC$ intersecting $\tau_e$ non-trivially.
\end{defn}

We note that for $e \in T_i$, a hypercarrier is mapped to an edge of
$T_i$ and that $\hcarrier{\calC}{\tau_e}$ is homeomorphic to $\tau_e \times [-1,1]$.

\begin{defn}\label{defn:i-transverse}
  We say an edge $\epsilon$ in some $Z\subset T_1\times T_2$ is
  $i$-transverse if it coincides with its $i$-leaf space, or
  equivalently it is mapped monomorphically via $p_i|\epsilon$, or
  equivalently if it is contained in a $j$-leaf.
\end{defn}

An immediate consequence of Lemma \ref{lem:spur-removal} and Figure
\ref{fig:spur-free-face} is the following.
 
\begin{lem}\label{lem:free-edges}
  Let $e \in \edges{T_i}$, if $G_e\actson \tau_e$ is not minimal then
  $\hcarrier{\calC}{\tau_e}$ has a square $\sigma$ containing an
  $i$-transverse free face $\epsilon$.
\end{lem}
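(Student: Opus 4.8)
The plan is to reduce the statement to the spur-detection supplied by Lemma \ref{lem:spur-removal}, transported across the product identification $\hcarrier{\calC}{\tau_e} \cong \tau_e \times [-1,1]$ noted after Definition \ref{defn:hypercarrier}. First I would record that, by Lemma \ref{lem:induced-splittings}, the action $G_e\actson \tau_e$ is cocompact, so Lemma \ref{lem:spur-removal} applies verbatim: since $G_e \actson \tau_e$ is assumed not minimal, the tree $\tau_e$ must contain a spur, i.e.\ a vertex $x$ of degree $1$. The remaining work is purely a translation of this spur into the language of squares and faces of the core.

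Next I would set up the dictionary between the combinatorics of $\tau_e$ and the faces of the hypercarrier. Taking $i=1$ without loss of generality, every square of $\hcarrier{\calC}{\tau_e}$ has the form $e \times f$ with $f \in \edges{T_j}$, and $\tau_e$ meets such a square in the horizontal slice $\{m_e\} \times f$, which is one of its edges. Of the four edges of $e\times f$, the two edges $e \times \{a\}$ with $a$ an endpoint of $f$ are exactly the $i$-transverse ones, since $p_i$ carries them isomorphically onto $e$ while it collapses the other two; under the product identification these are precisely the fibres $\{x\}\times[-1,1]$ over the vertices $x$ of $\tau_e$. Thus the $i$-transverse edges of $\hcarrier{\calC}{\tau_e}$ correspond bijectively to the vertices of $\tau_e$, and the spur of the first step singles out one such edge $\epsilon$.

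The crux is then a counting argument identifying free faces with spurs. A square of $\calC$ containing an $i$-transverse edge $e \times \{a\}$ must project under $p_i$ onto $e$ (an edge-to-edge projection cannot shrink $e$), so it automatically lies in the hypercarrier and has the form $e \times f$ with $f$ incident to $a$; equivalently it contributes the edge $\{m_e\}\times f$ of $\tau_e$ at the corresponding vertex $x$. Hence the number of squares containing this $i$-transverse edge equals the degree of $x$ in $\tau_e$, so the edge is a free face exactly when $x$ is a spur. Feeding in the degree-$1$ vertex from the first step produces the desired $i$-transverse free face $\epsilon$ in a square $\sigma$ of $\hcarrier{\calC}{\tau_e}$. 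The main point requiring care is that the free-face count, whether taken in the hypercarrier or in the whole core $\calC$, gives the same answer; this is exactly what the observation that any square containing $e \times \{a\}$ projects onto $e$, and therefore meets $\tau_e$, guarantees, so no squares outside the hypercarrier can spoil the count.
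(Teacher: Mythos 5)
Your argument is correct and follows exactly the route the paper intends: the paper declares the lemma ``an immediate consequence of Lemma \ref{lem:spur-removal} and Figure \ref{fig:spur-free-face}'', and your proof simply writes out that figure's dictionary --- cocompactness of $G_e\actson\tau_e$ from Lemma \ref{lem:induced-splittings}, a spur from Lemma \ref{lem:spur-removal}, and the identification of $i$-transverse edges of $\hcarrier{\calC}{\tau_e}\cong\tau_e\times[-1,1]$ with vertices of $\tau_e$ so that the number of squares of $\calC$ containing such an edge equals the vertex degree. Your extra observation that any square of $\calC$ containing $e\times\{a\}$ necessarily lies in the hypercarrier, so the face is free in all of $\calC$ and not merely in $\hcarrier{\calC}{\tau_e}$, is a worthwhile detail the paper leaves implicit.
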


\begin{figure}[h]
  \centering
  \begin{tikzpicture}[scale=0.5]
    \draw[thick, pattern=horizontal lines] (0,0) -- (0,2) --
    (8,2) -- (8,0) --cycle;
    \draw[thick] (2,2) -- (2,0) (4,2) -- (4,0) (6,2) -- (6,0);
    \draw[very thick] (0,1) -- (8,1);    
    \draw[fill=white] (4,2) -- (5,1) -- (5,-1) -- (4,0) --cycle;
    \draw[thick, pattern=north west lines] (4,2) -- (5,1) -- (5,-1) --
    (4,0) --cycle;
    \draw[very thick] (4,1) -- (5,0);
    \draw[very thick] (5,1) -- (5,-1);
    \bvert{0,1} \bvert{2,1} \bvert{4,1} \bvert{6,1} \bvert{8,1}
    \bvert{5,0}
    \draw (5,-0.5) node[right]{$\epsilon$};
     \draw (0,1) node[left]{$\ldots $} (8,1) node[right]{$\ldots $};
     \draw (-3,2)  -- node[left]{$e$} (-3,0);
     \bvert{-3,2} 
     \bvert{-3,0}
     \draw[->] (-1,1) --node[above]{$p_i$} (-2,1);
  \end{tikzpicture}
  \caption{A spur of $\tau_e$ and the corresponding free face
    $\epsilon$ in the hypercarrier $\hcarrier{\calC}{\tau_e}$.}
\label{fig:spur-free-face}
\end{figure}
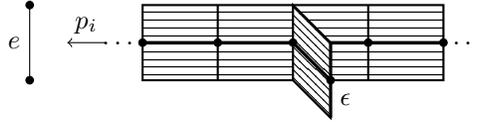

We now borrow some terminology from \cite{Diao-Feighn-free-split}.

\begin{defn}
  A simplicial map $S \to T$ between two trees that is obtained by
  identifying edges sharing a common vertex is called a
  \define{folding}. If $T$ is obtained from $S$ by a folding, then we
  say $S$ is obtained from $T$ by a \define{cleaving.}
\end{defn}

We now have the following which is immediate (see Figure
\ref{fig:collapse-n-cleave}). 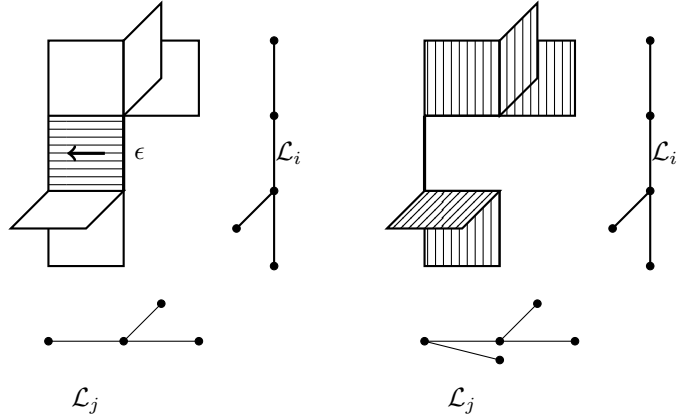
\begin{figure}[h]
  \centering
  \begin{tikzpicture}[scale=0.5]
    \draw[thick] (0,-2) -- (0,4) -- (2,4) -- (2,-2) --cycle;    
    \draw[thick] (2,4) -- (4,4) -- (4,2) -- (2,2) --cycle;
    \draw[fill=white] (2,4) -- (3,5) -- (3,3) -- (2,2) -- cycle; 
    \draw[thick] (2,4) -- (3,5) -- (3,3) -- (2,2) -- cycle; 
    \draw[fill=white] (0,0) -- (-1,-1) -- (1,-1) -- (2,0) --cycle;
    \draw[thick] (0,0) -- (-1,-1) -- (1,-1) -- (2,0) --cycle;
    \draw[thick] (0,2) -- (2,2);
    \draw[very thick] (2,0) -- node[right]{$\epsilon$} (2,2);
    \draw[pattern = horizontal lines] (0,0) -- (0,2) -- (2,2) --
    (2,0);
    \draw[very thick,->] (1.5,1) -- (0.5,1);
    \begin{scope}[xshift=6cm]
      \draw[thick] (0,-2) -- (0,4) (-1,-1) -- (0,0);
      \bvert{0,-2} \bvert{0,4} \bvert{-1,-1} \bvert{0,0} \bvert{0,2}
      \draw(1,1) node[left]{$\calL_i$};
    \end{scope}
    \begin{scope}[yshift=-4cm]
      \draw (0,0)--(4,0) (2,0)--(3,1);
      \bvert{0,0} \bvert{4,0}\bvert{2,0} \bvert{3,1}
      \draw (1,-1) node[below]{$\calL_j$};
    \end{scope}
    
    \begin{scope}[xshift = 10cm]
      \draw[thick,pattern=vertical lines] (0,-2) -- (0,0) -- (2,0) -- (2,-2) --cycle;    
      \draw[thick,pattern=vertical lines] (0,2) -- (0,4) -- (2,4) -- (2,2) --cycle;    
      \draw[thick,pattern=vertical lines] (2,4) -- (4,4) -- (4,2) -- (2,2) --cycle;
      \draw[fill=white] (2,4) -- (3,5) -- (3,3) -- (2,2) -- cycle; 
      \draw[thick,pattern=vertical lines] (2,4) -- (3,5) -- (3,3) -- (2,2) -- cycle; 
      \draw[fill=white] (0,0) -- (-1,-1) -- (1,-1) -- (2,0) --cycle;
      \draw[thick,pattern=north east lines] (0,0) -- (-1,-1) -- (1,-1) -- (2,0) --cycle;
      \draw[thick] (0,2) -- (2,2);
      \draw[very thick] (0,0) -- (0,2);
      \begin{scope}[xshift=6cm]
        \draw[thick] (0,-2) -- (0,4) (-1,-1) -- (0,0);
        \bvert{0,-2} \bvert{0,4} \bvert{-1,-1} \bvert{0,0} \bvert{0,2}
        \draw(1,1) node[left]{$\calL_i$};
      \end{scope}
      \begin{scope}[yshift=-4cm]
        \draw (0,0)--(4,0) (2,0)--(3,1);
        \draw (0,0) -- (2,-0.5);
        \bvert{0,0} \bvert{4,0}\bvert{2,0} \bvert{3,1} \bvert{2,-0.5}
        \draw (1,-1) node[below]{$\calL_j$};
      \end{scope}
    \end{scope}
    
  \end{tikzpicture}
  \caption{The effects of collapsing an $i$-transverse free face
    $\epsilon$: the leaf space $\calL_j$ gets cleaved, $\calL_i$
    remains unchanged. On the right the $j$-leaves are drawn.}
  \label{fig:collapse-n-cleave}
\end{figure}

\begin{lem}\label{lem:collapse-cleave}
  Let $\epsilon \subset Z \subset T_1\times T_2$ be an $i$-transverse
  free face in a square $\sigma$. If we collapse $\sigma$ onto the
  face opposite to $\epsilon$ the leaf space $\calL_i$ is unchanged
  and the leaf space $\calL_j$ gets cleaved.
\end{lem}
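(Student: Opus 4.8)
The plan is to fix coordinates on the single square $\sigma$, use freeness of $\epsilon$ to see that the collapse is a local move supported on $\overline\sigma$, and then read off the two leaf spaces separately. Write $\sigma=e_1\times e_2$ with $e_1\in\edges{T_1}$, $e_2\in\edges{T_2}$, and, relabelling the factors, assume $i=2$. Then $\epsilon$ is the $2$-transverse face $\{x_1\}\times e_2$ (a single $p_1$-fibre, hence contained in a $1$-leaf), its opposite face is $\epsilon'=\{x_0\}\times e_2$, and the two remaining faces are the $1$-transverse edges $a=e_1\times\{y_0\}$ and $b=e_1\times\{y_1\}$; here $x_0,x_1$ and $y_0,y_1$ are the endpoints of $e_1$ and $e_2$. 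Collapsing $\sigma$ onto $\epsilon'$ is the elementary collapse of the free pair $(\sigma,\epsilon)$: since $\epsilon$ lies in no square but $\sigma$, deleting the open cells $\sigma$ and $\epsilon$ retracts $\overline\sigma$ onto the three surviving faces $\epsilon',a,b$ and leaves $Z$ untouched outside $\overline\sigma$.

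For $\calL_i=\calL_2$ I would argue that the move preserves the $2$-foliation. In $\calL_2$ the square $\sigma$ and each of its $2$-transverse faces $\epsilon,\epsilon'$ project to the one edge $p_2(\sigma)=e_2$ joining the $2$-leaf through $y_0$ to the $2$-leaf through $y_1$; after the collapse this edge is still carried by the surviving section $\epsilon'$. Since deleting cells can only shorten a leaf and the move is confined to $\overline\sigma$, no $2$-leaf is split (the horizontal faces $a,b$ keep the relevant leaves connected) and none is merged. Hence $\calL_2$ is unchanged.

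For $\calL_j=\calL_1$ I would track the $1$-leaf meeting $\epsilon$, where the cleaving appears. Before the collapse $\epsilon$ is the vertical connection making $(x_1,y_0)$ and $(x_1,y_1)$ lie in a common $1$-leaf $L$; consequently $a$ and $b$ both run in $\calL_1$ from $[L']$ (the leaf through $x_0$) to $[L]$, and the square $\sigma$ identifies them to the single edge $p_1(\sigma)=e_1$. Deleting $\epsilon$ removes precisely this connection: as $\{x_1\}\times e_2$ is the only $p_1$-fibre joining its endpoints, $L$ splits into leaves $L_0\ni(x_1,y_0)$ and $L_1\ni(x_1,y_1)$, and the former single edge of $\calL_1$ is replaced by the two edges $a\colon[L']\to[L_0]$ and $b\colon[L']\to[L_1]$ sharing the vertex $[L']$. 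This is exactly a cleaving: folding these two edges back together at $[L']$ recovers the original $\calL_1$.

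The crux is this last step, namely that deleting $\epsilon$ does no more than split the single leaf $L$ into the two pieces attached to $\epsilon'$ at the common vertex $[L']$, so that the change to $\calL_1$ is one cleaving rather than something coarser. This is where freeness of $\epsilon$ is decisive: because $\epsilon$ lies in no other square, its removal disturbs no $1$-leaf disjoint from $\overline\sigma$ and can only cut $L$ along $\epsilon$; dually, the persistence of the opposite face $\epsilon'$ is exactly what pins down the invariance of $\calL_i$.
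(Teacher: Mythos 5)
Your proof is correct and follows the same local analysis that the paper treats as immediate from Figure~\ref{fig:collapse-n-cleave}: you identify the right reason the move is harmless away from $\overline\sigma$ (freeness of $\epsilon$ means no other cell meets $\{x_1\}\times\mathring e_2$) and the right reason the $j$-leaf through $\epsilon$ splits into exactly two pieces folding back to the original (a $j$-leaf is a subtree of $\{x_1\}\times T_2$ containing the edge $e_2$, so deleting that open edge disconnects it into precisely two components meeting $\epsilon'$'s leaf at the common vertex $[L']$). The only slight imprecision is the parenthetical crediting $a,b$ with keeping the interior-level $i$-leaves connected --- those leaves survive because the removed fibres form a pendant piece attached to the rest only along $\epsilon'$ --- but you state the correct reason in your final paragraph, so this does not affect the argument.
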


In fact this lemma can be used backwards to give a proof of Theorem
\ref{thm:core}. We will sketch it, leaving the details to an
interested reader familiar with folding sequences
\cite{BF-simplicial, Stallings-fold, Dunwoody-1998, KMW-2005}.

\begin{proof}[Sketch of the proof of Theorem
\ref{thm:core}]
Pick some vertex $v \in T_1\times T_2$ and consider its $G$-orbit. We
can add finitely many connected $G$-orbits of edges to get a connected
$G$-complex $Gv \subset \calC_1 \subset T_1\times T_2$. $\calC_1$ has
leaf spaces $\calL_1, \calL_2$ which project onto $T_1,T_2$. The
disconnectedness of the fibres of the projections $p_i|_{\calC_1}:
\calC_1 \onto T_i$ coincides with the failure of injectivity of the
projections $\calL_i \onto T_i$. By Lemma \ref{lem:collapse-cleave}
(backwards) adding a square can give a folding of one of the leaf
spaces. Since the edge groups of $T_1,T_2$ are finitely generated, and
because adding all the squares of $T_1\times T_2$ folds $\calL_i$ to
$T_i$, it follows that the leaf spaces $\calL_i$ can be made to
coincide with $T_i$ after adding finitely many $G$-orbits of squares.
\end{proof}

\section{The statement and proof of the main theorem}\label{sec:main}
For this section we fix a collection $\calH$ of subgroups of $G$. We
let $T_\infty,T_\calf$ be cocompact, minimal $G$-trees in which the
subgroups in $\calH$ act elliptically. We further require that edge
groups of $T_\infty$ are infinite and finitely generated and that edge
groups of $T_\calf$ are finite. Note that any non-trivial tree
obtained by a collapse of $T_\infty$ has infinite edge groups whereas
any collapse of $T_\calf$ has finite edge groups. It follows that
$T_\infty, T_\calf$, having no non-trivial common collapses, satisfy
the hypotheses of Theorem \ref{thm:core}.

\begin{thm}[Main Theorem]\label{thm:main}
  Let $\calH$ be a collection of subgroups of $G$ and let
  $T_\infty,T_\calf$ be cocompact, minimal $G$-trees in which the
  subgroups in $\calH$ act elliptically, suppose furthermore that the
  edge groups of $T_\calf$ are finite and that the edge groups of
  $T_\infty$ are infinite. Then there exists a vertex $v \in
  \verts{T_\infty}$ and a non-trivial, cocompact, minimal $G_v$-tree
  $\cleave T_v$ such that \begin{enumerate}[(i)]
  \item\label{it:star} for every $f \in \edges{T_\infty}$
    incident to $v$ the subgroups $G_f \leq G_v$ act elliptically on
    $\cleave{T}_v$, and 
  \item\label{it:still-elliptic} for every $H\in\calH, g\in G$ the subgroup
    $H^g\cap G_v\leq G_v$ acts elliptically on $\cleave T_v$.
  \end{enumerate} 
  Moreover,
  \begin{enumerate}[(1)]
  \item\label{it:finite-edge-gp-case} either every edge group of $\cleave T_v$ is finite; or
  \item\label{it:cleave-case} there is some edge $e\in
    \edges{T_\infty}$, incident to $v$, that not only satisfies
    (\ref{it:star}), but also satisfies the following:
  \begin{enumerate}
  \item\label{it:Ge-split} $G_e$ splits essentially as an amalgamated
    free product or an HNN extension with finite edge group.
  \item\label{it:Ge-stab} $G_e = G_{v_e}$ for some vertex $v_e \in \verts{\cleave{T}_v}$.
  \item\label{it:cleave-edge-groups} the edge stabilizers of $\cleave
    T_v$ are conjugate in $G_v$ to the vertex group(s) of the
    splitting of $G_e$ found in (\ref{it:Ge-split}); in particular the
    edge groups of $\cleave T_v$ are $\pdsfac$ $G_e$.
  \item\label{it:vertex-gps} The vertex groups of $\cleave T_v$ that
    are not conjugate in $G_v$ to $G_e$ are also vertex groups of a
    one edge splitting of $G_v$ with a finite edge group; in
    particular these vertex groups of $\cleave T_v$ are $\pdsfac G_v$.
  \end{enumerate}
\end{enumerate}
\end{thm}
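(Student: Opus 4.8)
The plan is to play the two given trees off one another through Guirardel's core and read the blow-up off the induced leaf-space decompositions, using the shaving and cleaving lemmas of Section~\ref{sec:preliminaries} to put $\cleave T_v$ into normal form. First I would form the core $\calC \subset T_\infty \times T_\calf$; the no-common-collapse observation preceding the statement guarantees that Theorem~\ref{thm:core} applies, so $\calC$ is cocompact. Taking $i$ to be the index of $T_\infty$, Lemma~\ref{lem:induced-splittings} supplies for each $v \in \verts{T_\infty}$ and $e \in \edges{T_\infty}$ the cocompact induced trees $\tau_v, \tau_e$, embedded via $p_\calf$ into $T_\calf$. The crucial bookkeeping point is that every edge of $\tau_v$ (resp.\ $\tau_e$) is a square-fibre whose $\calf$-projection is an edge of $T_\calf$, so its $G_v$- (resp.\ $G_e$-) stabilizer is an intersection with a \emph{finite} edge group of $T_\calf$; hence $\tau_v$ and $\tau_e$ always have finite edge groups, while their vertex stabilizers are the possibly infinite intersections $G_v\cap G_b,\ G_e\cap G_b$ with $b\in\verts{T_\calf}$. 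Lemma~\ref{lem:induced-splittings} also tells me $\tau_e$ is infinite precisely when $G_e$ fails to act elliptically on $T_\calf$: this is the dichotomy separating conclusions (\ref{it:finite-edge-gp-case}) and (\ref{it:cleave-case}). Finally I record the ellipticity facts feeding (\ref{it:star}) and (\ref{it:still-elliptic}): since $\tau_v\into T_\calf$, any subgroup of $G_v$ that is elliptic in $T_\calf$ fixes a point of $\tau_v$ (project its $T_\calf$-fixed point to the nearest point of $\tau_v$); thus each $H^g\cap G_v$ is elliptic by the hypothesis on $\calH$, and an incident $G_f$ is elliptic on $\tau_v$ exactly when $\tau_f$ is finite.

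Suppose first that every edge group of $T_\infty$ is elliptic on $T_\calf$. Then every $\tau_f$ is finite, so all incident edge groups and all $H^g\cap G_v$ act elliptically on each $\tau_v$. I would then argue that some $\tau_v$ is non-trivial, for otherwise every vertex and edge group of $T_\infty$ would be elliptic in $T_\calf$ and a standard argument produces a non-trivial common collapse of $T_\infty$ and $T_\calf$, contradicting the standing hypothesis. For such a $v$ I set $\cleave T_v$ to be the minimal subtree of $\tau_v$, extracted by the shaving of Lemma~\ref{lem:spur-removal}: it is non-trivial, cocompact, minimal, and inherits the finite edge groups of $\tau_v$, while shaving visibly preserves the ellipticities just recorded. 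This is conclusion (\ref{it:finite-edge-gp-case}).

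Now suppose some edge $e$ of $T_\infty$ with endpoint $v$ has $G_e$ non-elliptic in $T_\calf$. Then $\tau_e$ is an infinite cocompact $G_e$-tree with finite edge groups, and after shaving it is a non-trivial minimal $G_e$-tree, i.e.\ an essential amalgam or HNN decomposition of $G_e$ with finite edge group --- this is (\ref{it:Ge-split}), its vertex groups being the intersections $G_e\cap G_b$. The image of $\tau_e$ in $\tau_v$ is an unbounded $G_e$-invariant subtree, so $G_e$ is \emph{not} elliptic on the raw leaf $\tau_v$ and $\tau_v$ is not yet a legal blow-up; I must produce a $\cleave T_v$ in which $G_e$ fixes a vertex $v_e$. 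Because $\tau_v$ has finite edge groups whereas (\ref{it:cleave-edge-groups}) demands edge groups conjugate to the infinite vertex groups $G_e\cap G_b$, the tree $\cleave T_v$ cannot be a mere collapse of $\tau_v$ (collapsing only shrinks the finite edge-group collection): it must be obtained by a genuine \emph{cleaving} that concentrates the $G_e$-subtree at a single vertex $v_e$ with $G_{v_e}=G_e$ (giving (\ref{it:Ge-stab}) and the $e$-part of (\ref{it:star})) and re-expresses the $\calf$-direction splitting $\tau_e$ of $G_e$ as the fan of edges at $v_e$, whose stabilizers are then the $G_e\cap G_b$ (giving (\ref{it:cleave-edge-groups}), hence the edge groups are $\pdsfac G_e$), the complementary vertices being governed by one-edge splittings of $G_v$ over finite groups (giving (\ref{it:vertex-gps}), hence $\pdsfac G_v$). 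I would realize this by applying the free-face and collapse--cleave moves of Lemmas~\ref{lem:free-edges} and~\ref{lem:collapse-cleave} to the hypercarrier $\hcarrier{\calC}{\tau_e}$.

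The main obstacle is exactly this last construction: arranging the cleaving so that clauses (\ref{it:Ge-stab})--(\ref{it:vertex-gps}) hold \emph{simultaneously} --- that $G_e$ emerges as a full vertex stabilizer $G_{v_e}$, that every edge stabilizer of $\cleave T_v$ is conjugate in $G_v$ to one of the $G_e\cap G_b$, and that the remaining vertex groups are vertex groups of finite one-edge splittings of $G_v$ --- all while keeping every other incident $G_f$ and every $H^g\cap G_v$ elliptic, and certifying that $\cleave T_v$ is cocompact, minimal, and non-trivial. Verifying that the cleaving neither destroys the ellipticities of the second paragraph nor collapses $\cleave T_v$ to a point, and tracking stabilizers through the collapse--cleave moves to pin down the $\pdsfac$ relations, is where the bulk of the technical work lies.
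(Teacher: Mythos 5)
Your overall strategy --- Guirardel's core, the induced trees $\tau_v,\tau_e$ of Lemma~\ref{lem:induced-splittings}, shaving via Lemma~\ref{lem:spur-removal}, and a cleaving move to produce $\cleave T_v$ --- is the right one, but there are two genuine gaps. First, your dichotomy is not the one that drives the proof. You split into cases according to whether every edge group of $T_\infty$ is elliptic on $T_\calf$; the actual dichotomy is governed by the location of a certain $\calf$-transverse free face $\epsilon$ of the shaved ($\infty$-minimal) core $\calC_s$. The observation you are missing is that because the edge stabilizers of $T_\calf$ are \emph{finite}, the fibres $\tau_k$, $k\in\edges{T_\calf}$, have finite diameter, so each component of $\tau_k\cap\calC_s$ has a spur, and hence $\calC_s$ always contains an $\calf$-transverse free face $\epsilon$ with \emph{finite} stabilizer. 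Case (\ref{it:finite-edge-gp-case}) occurs when the orbit of $\epsilon$ avoids all hypercarriers attached to $\tau_v$, and then $\cleave T_v$ is the non-$\epsilon$-collapse $C(\tau_v\cap\calC_s,\epsilon)$, \emph{not} the minimal subtree of $\tau_v$: an incident $G_f$ may act non-elliptically on $\tau_v$ itself, yet it is elliptic on $C(\tau_v\cap\calC_s,\epsilon)$ because $\tau_f^+$ lies in a single complementary component of $G_v\epsilon$. Your version of case (1) covers only the special situation where every incident edge group is elliptic on $T_\calf$, and even there the non-triviality argument is suspect: ellipticity of all vertex groups of $T_\infty$ in $T_\calf$ would yield a common \emph{refinement}, not a common collapse, so it does not contradict the standing hypothesis.

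Second, and more seriously, you explicitly defer the construction in case (\ref{it:cleave-case}), which is where the content of the theorem lies; describing what $\cleave T_v$ must look like is not a proof that it exists. The mechanism is again the free face: when $\epsilon$ lies in (exactly one) hypercarrier $\hcarrier{\calC_s}{\tau_e}$, one $G_v$-equivariantly collapses the square containing $\epsilon$ onto the opposite face $\overline{\epsilon}$ (Lemma~\ref{lem:collapse-cleave}), detaches the hypercarriers not stabilized by $G_v$-conjugates of $G_e$, and collapses the resulting components $C_i$ and the translates of $\tau_e^-$ to vertices. Finiteness of $G_\epsilon$ is what makes the induced one-edge splittings of $G_e$ and of $G_v$ essential over finite groups, giving (\ref{it:Ge-split}) and (\ref{it:vertex-gps}); the other incident edge groups $G_f$ are elliptic on $\cleave T_v$ because their hypercarriers miss the orbit $G_v\epsilon$ and therefore stabilize some $C_i$, which is the point your sketch of (\ref{it:star}) leaves unaddressed. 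Note also that Lemma~\ref{lem:free-edges}, which you invoke for this step, cannot supply the free face you need: it produces $\infty$-transverse free faces from non-minimality of $\tau_e$, and collapsing those is only the preliminary shaving. The cleaving that creates $\cleave T_v$ comes from an $\calf$-transverse free face, whose existence rests on the finiteness of the edge groups of $T_\calf$ --- the hypothesis your argument never actually uses at the decisive moment.
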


An example of what happens in situation (\ref{it:cleave-case}) is
shown in Figure \ref{fig:operations-example}.

\begin{proof}
  Let $\calC$ be the core of $T_\infty\times T_\calf$. The
  $\infty$-leaf space $\calL_\infty$ is the tree $T_\infty$, and we
  can see $\calC$ as a tree of spaces (c.f. \cite{Scott-Wall} for
  details) which is a union of vertex spaces $\tau_v; v \in
  \verts{T_\infty}$ and edge spaces $\hcarrier{\calC}{\tau_e} = \tau_e \times
  [-1,1]; e \in \edges{T_\infty}$ attached to the $\tau_v$ along the
  subspaces $\tau_e\times \{\pm 1\}$.

  It may be that for some $e \in \edges{T_\infty}$ the $G_e$-trees
  $\tau_e$ are not minimal. By Lemmas \ref{lem:free-edges},
  \ref{lem:spur-removal}, \ref{lem:collapse-cleave}, we can repeatedly
  $G$-equivariantly collapse $\infty$-transverse free faces, so that
  after finitely many steps we obtain a \define{shaved core}
  $\calC'_s$ such that $\tau_e \cap \calC_s'$ are minimal $G_e$
  trees. Although the $\calf$-leaf space was cleaved repeatedly in the
  shaving process given by Lemma \ref{lem:spur-removal}, the
  $\infty$-leaf space is unchanged. We still write $\calL_\infty =
  T_\infty$.

  We will now construct a complex $\calC_s \subset \calC_s' \subset
  \calC$, called the \define{$\infty$-minimal core}. Its principal
  feature is that every tree $\tau_v\cap \calC_s$, $\tau_e\cap\calC_s$
  will be a minimal $G_v,G_e$-tree where $v \in \verts{T_\infty}$,
  $e\in \edges{T_\infty}$ respectively. Denote
  $\hcarrier{\calC'_S}{\tau_e} = \hcarrier{\calC}{\tau_e} \cap
  \calC'_s$. We call $\hcarrier{\calC'_S}{\tau_e}$ the
  \define{$\calC'_s$-hypercarrier} attached to a vertex space $\tau_v$
  in $\calC'_s$.  $\tau_e\cap\calC'_s$ naturally projects injectively
  into $\tau_v$ as a minimal $G_e$-invariant subtree where $G_e \leq
  G_v$. If $T$ is a $G$-tree and $H\leq G$, denoting by $T(S)$ the
  minimal $S$-invariant subtree, we have $T(H) \subset T(G)$. It
  therefore follows that all the $\calC'_s$-hypercarriers attached to
  $\tau_v$ are actually attached to the minimal $G_v$-invariant
  subtree of $\tau_v$. By Lemma \ref{lem:spur-removal}, after finitely
  many equivariant spur collapses we can make the vertex spaces
  $\tau_v$ into minimal $G_v$-trees. None of these collapses will
  affect the attached $\calC'_s$-hypercarriers
  $\hcarrier{\calC'_s}{\tau_e}$ and the leaf space $\calL_\infty =
  T_\infty$ is preserved. We have therefore constructed $\calC_s$, the
  $\infty$-minimal core. Denote $\hcarrier{\calC_s}{\tau_e} =
  \hcarrier{\calC'_s}{\tau_e} \cap \calC_s$. By what was written
  above, $\hcarrier{\calC_s}{\tau_e} = \hcarrier{\calC'_s}{\tau_e}$,
  and we now call $\hcarrier{\calC_s}{\tau_e}$ a
  \define{$\calC_s$-hypercarrier}.

  For every edge $k \in \edges{T_\calf}$, $G_k$ is finite, therefore a
  minimal $G_k$ tree is a point; thus, by cocompactness and
  regularity, the trees $\tau_k \in \calC$ have finite diameter and
  the same must be true of every connected component of $\tau_k \cap
  \calC_s$, so every connected component of $\tau_k \cap \calC_s$ has
  a spur. It therefore follows that $\calC_s$ must have an
  $\calf$-transverse free face $\epsilon$ containing a spur of some
  connected component of $\tau_k\cap \calC_s$ for some $k \in
  \edges{T_\calf}$. Furthermore the stabilizer $G_\epsilon \leq
  G_{p_\calf(\epsilon)}$ is an edge stabilizer of $T_\calf$; therefore
  it is finite.  This $\calF$-transverse free face $\epsilon$ must be
  contained in some $\tau_v\cap\calC_s; v\in
  \verts{T_\infty}$. Suppose first that $\epsilon$ was not contained
  in any $\calC_s$-hypercarrier attached to $\tau_v\cap\calC_s$. Then
  for every $e \ni v$ in $\edges{T_\infty}$, $G_e$ fixes some
  $\calC_s$-hypercarrier $\hcarrier{\calC_s}{\tau_e}$ such that
  $\hcarrier{\calC_s}{\tau_e} \cap \tau_v = \tau_e^+$ is contained in
  the complement $\left(\tau_v\cap\calC_s\right) \setminus
  G_v\epsilon$.

\begin{defn}\label{defn:non-e-tree}
  Let $T$ be a minimal $G$-tree and let $e \in \edges{T}$. We denote
  by $C(T,e)$, the \define{non-$e$-collapse of $T$}, the tree whose
  edges are the edges in the orbit $Ge\subset T$ and whose vertices
  are the closures of the connected components of $T \setminus Ge$,
  with $v \in \verts{C(T,e)}$ adjacent to $e \in \edges{C(T,e)}$ if
  and only if, viewed as subsets of $T$, $e\cap v \neq \emptyset$.
\end{defn}

It therefore follows that $\cleave T_v =
C\left(\tau_v\cap \calC_s,\epsilon\right)$ is a tree with finite edge groups, in which each
$G_e \leq G_v, e \in \edges{T_\infty}$ act elliptically, and also
conjugates of groups in $\calH$ intersecting $G_v$ act elliptically;
thus (\ref{it:star}), (\ref{it:still-elliptic}) and
(\ref{it:finite-edge-gp-case}) are satisfied.

Otherwise the free face $\epsilon \subset \tau_v\cap \calC_s$ is, by
definition of a free face, contained in \emph{exactly one}
$\calC_s$-hypercarrier $\hcarrier{\calC_s}{\tau_e}$. We will now
construct the $G_v$-tree $\cleave T_v$ satisfying
(\ref{it:cleave-case}). This construction is illustrated in Figure
\ref{fig:cleave-case}.
\begin{figure}[h]
  \centering
  \begin{tikzpicture}[scale=0.5]
  \begin{scope}[yshift=4cm]
    \chunk{0}
    \chunk{6cm}
    \chunk{-6cm}
    \draw[very thick] (-8,-2) -- (8,-2);
    \draw[pattern = vertical lines] (2,-2) --node[below]{$\overline{\epsilon}$} (4,-2) -- (4,0)
    --node[above]{$\epsilon$} (2,0) --cycle;
    \draw[very thick] (2,0) -- (4,0);
    \draw[very thick,->] (3,-0.5) -- (3,-1.5);
    \begin{scope}[xshift=-6cm]
      \draw[pattern = vertical lines] (2,-2) -- (4,-2) -- (4,0)
      -- (2,0) --cycle;
    \draw[very thick] (2,0) -- (4,0);
    \draw[very thick,->] (3,-0.5) -- (3,-1.5);
    \end{scope}
    \draw (8,0) node[right] {$\tau_v\cap \calC_s$};
    \draw (8,-2) node[right] {$\tau_e^-$};
    \draw (-8,-1) node[left] {$\ldots$} (8,-1) node[right] {$\ldots$};
 \end{scope}

  \chunkII{0}
  \draw (0,0) node{$C_0$};
  \chunkII{6cm}
  \draw (6,0) node{$C_1$};
  \chunkII{-6cm}
  \draw (-6,0) node{$C_{-1}$};
  \draw (-8,-2) -- (8,-2);
  \draw[line width = 0.1cm] (-8,-2) -- node[below]{$K_{-1}$} (-4,-2);
  \draw[line width = 0.1cm] (-2,-2) -- node[below]{$K_0$} (2,-2);
  \draw[line width = 0.1cm] (8,-2) -- node[below]{$K_1$} (4,-2);
  \draw (3,-2) node[below] {$\overline{\epsilon}$};
  \draw (-8,-1) node[left] {$\ldots$} (8,-1) node[right] {$\ldots$};
  \begin{scope}[yshift=-5cm]
    \draw[thick] (-6,0)node[above]{$v_{-1}$} -- (0,-2) --
    (0,0)node[above]{$v_{0}$} (6,0)node[above]{$v_0$} --
    (0,-2)node[below]{$v_e$};
    \wvert{-6,0} \wvert{6,0} \wvert{0,0} \bvert{0,-2}
    \draw (-8,-1) node[left] {$\ldots$} (8,-1) node[right] {$\ldots$};
  \end{scope}
\end{tikzpicture}

\caption{Constructing $\cleave T_v$. The top shows a portion of $Z$,
  the middle shows the result of equivariantly collapsing the free
  face $\epsilon$, the bottom shows the corresponding $\infty$-leaf
  space.}
  \label{fig:cleave-case}
\end{figure}
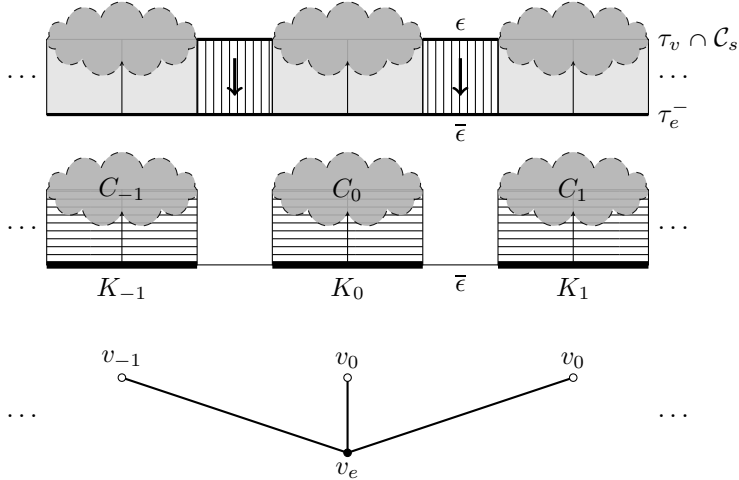
We first take the subset \[ Z = \left(\tau_v \bigcup_{e \ni v}
  \hcarrier{\calC_s}{\tau_e}\right)\bigcap \calC_s,\] i.e. $\tau_v\cap
\calC_s$ to which we attach all adjacent $\calC_s$-hypercarriers.  Now
the $G_v$-translates of $\epsilon$ are contained in the
$\calC_s$-hypercarriers $\hcarrier{\calC_s}{\tau_{ge}}; g \in
G_v$. For each such $\calC_s$-hypercarrier we denote by $\tau_{ge}^-$
the connected component of $\tau_e \times \{\pm 1\} \subset
\hcarrier{\calC_s}{\tau_{ge}}$ not contained in $\tau_v\cap \calC_s$ (see the top
of Figure \ref{fig:cleave-case}.)

We now $G_v$-equivariantly collapse the square $\sigma \supset
\epsilon$ onto the opposite side $\overline{\epsilon}$, to obtain a
connected $G_v$-subset $Z_c\subset Z$ (see the middle of Figure
\ref{fig:cleave-case}.) The resulting intersection $\tau_v \cap Z_c$
consists of a collection of connected components $\{C_i \mid i \in
I\}$. Similarly, the $G_e$-translates of ${\overline{\epsilon}}$ give
connected components $\{K_i\mid i \in I\}$ of $\tau_e \setminus
G_e\overline{\epsilon}$. Because $G_e$ acts on
$C(\tau_e^-,\overline{\epsilon})$, and by minimality of $\tau_e\cap \calC_s$, this
action is also minimal with one edge orbit. This gives us
(\ref{it:Ge-split}).

For every edge $v \in f \in \edges{T_\infty}$ that is not in the
$G_v$-orbit of $e$, the orbit $G_v\epsilon$ does not intersect
$\hcarrier{\calC_s}{\tau_f} \cap \tau_v$. It follows that each such $G_f \leq
G_v$ stabilizes some component $C_i$. We now detach from $Z_c$ all
$\calC_s$-hypercarriers not stabilized by a $G_v$-conjugate of $G_e$ to obtain a
$G_v$ complex $Z'_c \subset Z_c$, specifically\[ Z'_c = Z_c \bigcap
\left(\tau_v \bigcup_{g \in G_v} \hcarrier{\calC_s}{\tau_{ge}}\right)
\] Next we collapse each $G_v$-translate of $\tau_e^-$ to a vertex $v_e$,
collapse each component $C_i$ to a vertex $v_i$, and collapse each
connected component of $G_v$-translates of $\hcarrier{\tau_e}\cap
Z'_c$ onto an edge connecting $v_e$ and the corresponding vertex
$v_i$ to get the $G_v$-tree $\cleave T_v$. This is illustrated at the
bottom of Figure \ref{fig:cleave-case}.

Equivalently if we consider the leaf $\infty$-leaf space corresponding
to the union of the $\calC_s$-hypercarriers
$g\hcarrier{\calC_s}{\tau_e}; g \in G_v$ attached to
$\tau_v\cap\calC_s$, then we have a tree of radius 1, which is
$G_v$-isomorphic to $\{v\}\cup \left(\bigcup_{g\in G_v} ge\right)
\subset T_\infty$. After equivariantly collapsing the free face
$\epsilon$, Lemma \ref{lem:collapse-cleave} gives us a cleaving of
this radius 1 subtree to the infinite tree $\cleave T_v$ constructed
above. See Figure \ref{fig:cleave-leaf-space}.
\begin{figure}[htb]
  \centering
  \begin{tikzpicture}
    \draw (0,1) -- node[left]{$e$} (0,0) 
    (0,0) -- node[left]{$ge$}(0,-1);
    \bvert{0,1} 
    \bvert{0,-1} 
    \wvert{0,0}
    \draw (0,0) node[left]{$v$}; 
    \begin{scope}[xshift=7cm]
      \draw (-2,1)node[above]{$v_{e}$}--node[left]{$\cdots$}(-4,0) (-2,1)--(-2,0) (-2,1)--(0,0);
      \draw (-2,-1)node[below]{$v_{ge}$}--node[left]{$\cdots$}(-3,0)
      (-2,-1)--(-2,0) (-2,-1)--node[right]{$\cdots$} (-1,0);
      \draw (1,1)--node[left]{$\cdots$}(0,0) (1,1)--(1,0) (1,1)--node[right]{$\cdots$}(2,0);
      \wvert{-4,0}
      \wvert{-3,0}
      \wvert{-2,0}
      \wvert{-1,0}
      \wvert{0,0}
      \wvert{1,0}
      \wvert{2,0}
      \bvert{-2,1}
      \bvert{-2,-1}
      \bvert{1,1};
      \draw[ultra thick,->](-6.5,0.5) --node[above]{cleave} (-4.5,0.5);
      \draw[ultra thick,->](-4.5,-0.5) --node[below]{fold} (-6.5,-0.5);
    \end{scope}
    \end{tikzpicture}
    \caption{Equivariant collapsing free faces cleaves the leaf space
      of $Z'_C$ to a tree $\cleave T_v$ with infinite
      diameter.}
  \label{fig:cleave-leaf-space}
\end{figure}
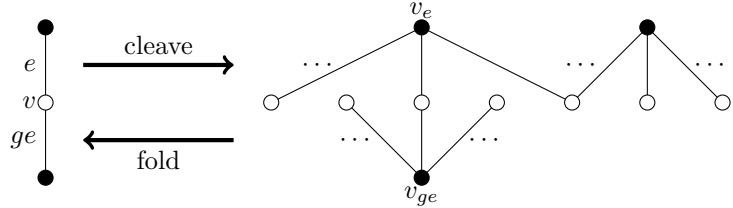
We note that if we took the $\infty$-leaf space of $Z_c$, i.e. had we
not detached the other hypercarriers, the resulting leaf space would
be a tree with many spurs. The tree $\cleave T_v$ we obtain is a minimal
$G_v$-tree that satisfies (\ref{it:Ge-stab}) and (\ref{it:star}).

We moreover note that, by construction, every
subgroup $H^g \cap G_v; g \in G, H \in \calH$ acts elliptically on
$\cleave T_v$; so (\ref{it:still-elliptic}) is satisfied as well.
 
The vertex stabilizers of $C(\tau_e^-,\overline{\epsilon})$
coincide with the component stabilizers $(G_e)_{K_i} = (G_v)_{K_i}$,
since $\tau_e^-$ is $G_v$-regular. We also have $(G_v)_{C_i} \cap
(G_v)_{\tau_e^-} = (G_v)_{K_i}$ (again see the middle of Figure
\ref{fig:cleave-case}.) It follows that the edges stabilizers of
$\cleave T_v$ satisfy (\ref{it:cleave-edge-groups}).

Finally note that the vertex groups of $\cleave T_v$ that are not
stabilized by $G_v$-conjugates of $G_e$ are also the vertex groups of
$C(\tau_v,\epsilon)$ (see the top of Figure \ref{fig:cleave-case}).
Finally, since $G_\epsilon$ is finite, (\ref{it:vertex-gps}) follows.
\end{proof}

\section{Splittings of virtually free groups}

Another way to use Theorem \ref{thm:main} is to obtain cleavings of
$G$-trees whose edge and vertex groups are ``smaller''. This will be
used as the inductive step in our proof of Theorem
\ref{thm:virt-free}.

\begin{figure}[h]
  \centering
  \begin{tikzpicture}[scale=0.5]
    \bvert{0,0} \bvert{2,0}
    \draw (0,0) node[above]{$A$} -- node[above]{$C$} (2,0)
    node[above]{$B$};

    \begin{scope}[xshift=8cm]
      \bvert{0,0} \bvert{2,0}
      \draw (0,0) node[above]{$A$} -- node[above]{$C$} (2,0)
      node[right]{$C$};
      \draw(2,0) -- node[above]{$C_1$} (4,1) node[right]{$B_1$}
      (2,0) -- node[below]{$C_2$} (4,-1) node[right]{$B_2$};
      \bvert{4,1} \bvert{4,-1}
      \draw[dashed] (3.5,0) circle (1.5);
      \draw[fill=gray, opacity=0.2] (3.5,0) circle (1.5);
    \end{scope}

    \begin{scope}[yshift=-6cm]
      \draw (0,0)node[above]{$A$} --node[above]{$C_1$} (2,1)
      node[above]{$B_1$}
      (0,0)node[above]{$A$} --node[below]{$C_2$} (2,-1)
      node[above]{$B_2$};
      \bvert{0,0} \bvert{2,1} \bvert{2,-1}
    \end{scope}

    \begin{scope}[xshift=10cm,yshift=-6cm]
      \bvert{0,0} \bvert{2,0}
    \draw (0,0) node[above]{$A$} -- node[above]{$C_1$} (2,0)
    node[above]{$B_1$};
    \end{scope}
    
    \draw[very thick,->] (3,0) --node[above]{Thm \ref{thm:main}}
    node[below]{Blow up $B$} (7,0);

    \draw[very thick, ->] (8,-1) -- node[above,sloped]{Cor
      \ref{cor:cleave-tree}} node[below,sloped]{Collapse $C$} (3,-4) ;
    
    \draw[very thick,dashed,->] (1,-1) -- node[above,sloped]{Cleave $C$}(1,-4);

    \draw[very thick, ->] (4,-6) --node[above]{Second construction}
    node[below]{Delete $C_2$} (8,-6);
  \end{tikzpicture}
  \caption{An example of the effects of Theorem \ref{thm:main},
    Corollary \ref{cor:cleave-tree}, and the second construction of
    the proof of Theorem \ref{thm:virt-free}, on a graph of
    groups. The vertices and edges are labeled by the corresponding
    vertex and edge groups. In all cases $B_i \pdsfac B$ and $C_i
    \pdsfac C$.}
  \label{fig:operations-example}
\end{figure}
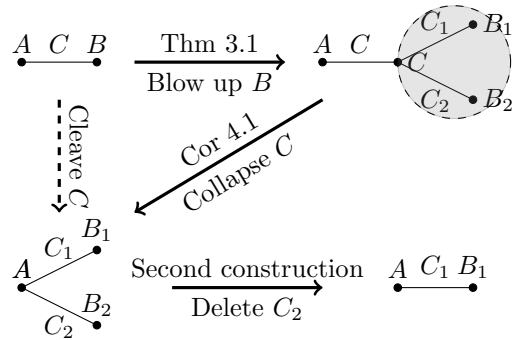
\begin{cor}\label{cor:cleave-tree}
  Let $T$ be a $G$-tree in which the subgroups $\calH$ act
  elliptically with infinite edge groups and let $G$ be many ended
  relative to $\calH$. Either some vertex $v \in \verts T$ can be
  blown up to a tree with finite edge groups; or there is an edge
  $e\in \edges{T}$ such that we can blow up $T$, relative to $\calH$,
  to some tree $\cleave T$, and then collapse the edges in the orbit
  of $e$ to points. The resulting tree $T'$ can also be obtained from
  $T$ by equivariantly cleaving some edge $e$. If $e'\in \edges{T'}$
  is a new edge obtained by a cleaving of $e$ then $G_{e'} \pdsfac
  G_e$. Also for each new vertex $v' \in \verts{T'}$ there is some $v
  \in \verts T$ that got cleaved such that $G_{v'} \pdsfac G_v$.

  Furthermore, in passing from $T$ to $T'$ the number of edge orbits
  and the number of vertex orbits does not decrease and increases by
  at most 1.
\end{cor}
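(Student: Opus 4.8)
My plan is to deduce the whole statement from Theorem~\ref{thm:main}. I apply it with $T_\infty=T$ and with $T_\calf$ a minimal cocompact $G$-tree with finite edge groups in which $\calH$ is elliptic; such a tree exists precisely because $G$ is many ended relative to $\calH$, and $T,T_\calf$ then satisfy the theorem's hypotheses. This produces a vertex $v\in\verts T$ and a blow up $\cleave T_v$ together with the dichotomy I want: in case~(\ref{it:finite-edge-gp-case}) every edge group of $\cleave T_v$ is finite, which, using (\ref{it:star}) and (\ref{it:still-elliptic}) to keep the incident edge groups and the subgroups of $\calH$ elliptic, is exactly the first alternative that $v$ can be blown up to a tree with finite edge groups. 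So I may assume we are in case~(\ref{it:cleave-case}), with its distinguished edge $e\ni v$.

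Next I carry out the blow up and the collapse. Blowing up the orbit $Gv$ in $T$ replaces each translate of $v$ by a copy of $\cleave T_v$ to give $\cleave T$; by (\ref{it:still-elliptic}) (equivalently, by the last sentence of Theorem~\ref{thm:blowup}) the subgroups of $\calH$ stay elliptic, so the blow up is relative to $\calH$. By (\ref{it:Ge-stab}) the edge $e$ attaches to the hub vertex $v_e$ with $G_{v_e}=G_e$, and I collapse the orbit $Ge$ to obtain $T'$. To see that $T'$ is a cleaving of $T$ I exhibit a $G$-equivariant folding $\phi\colon T'\to T$: it sends each component vertex of $\cleave T_v$ (with stabilizer some $(G_v)_{C_i}\le G_v$) to $v$, sends the vertex formed by merging $v_e$ with the far endpoint $u$ of $e$ to $u$ (legitimate since $G_e\le G_u$), sends every edge of $\cleave T_v$ (whose stabilizer lies in $G_e$ by (\ref{it:cleave-edge-groups})) to $e$, and is the identity elsewhere. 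The only identifications $\phi$ makes are of edges sharing the merged vertex, so it is a genuine folding; hence $T'$ is obtained from $T$ by equivariantly cleaving $e$, with new edges $\phi^{-1}(e)$ and new vertices $\phi^{-1}(v)$. This is precisely the passage illustrated by Figures~\ref{fig:cleave-leaf-space} and \ref{fig:operations-example}.

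The comparison of complexities is then read off. A new edge $e'=\phi^{-1}(e)$ has stabilizer an edge group of $\cleave T_v$, hence $\pdsfac G_e$ by (\ref{it:cleave-edge-groups}); a new vertex $v'=\phi^{-1}(v)$ has stabilizer $(G_v)_{C_i}\pdsfac G_v$ by (\ref{it:vertex-gps}). For the orbit count I use that $\phi$ is a bijection on $G$-orbits except over $[e]$ and $[v]$, so $E'=E-1+n_K$ and $V'=V-1+n_C$, where $n_K,n_C$ are the numbers of $G_v$-orbits of edges, respectively of component vertices, of $\cleave T_v$. To bound these I unwind the construction: $\cleave T_v$ has a single orbit of hubs $v_e$ (because $\tau_e^-$ is $G_v$-regular with stabilizer $G_e$) and is bipartite between hubs and components, while the edges at a fixed hub correspond $G_e$-equivariantly to the vertices of the one-edge splitting $C(\tau_e^-,\overline\epsilon)$ of $G_e$ from (\ref{it:Ge-split}). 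That splitting has at most two vertex orbits, so $n_K\le 2$; and since $G_v\backslash\cleave T_v$ is connected with $1+n_C$ vertices and $n_K$ edges, $n_C\le n_K\le 2$. As $n_C,n_K\ge1$, neither orbit count decreases and each grows by at most one.

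I expect the orbit bound to be the delicate point: establishing $n_K\le2$ forces me to trace the construction of $\cleave T_v$ carefully enough to see that every edge lies in the $G_v$-saturation of the star of one hub and that this star reproduces the single-edge splitting $C(\tau_e^-,\overline\epsilon)$ of $G_e$. A second point deserving care is the degenerate situation in which the far endpoint $u$ of $e$ also lies in the orbit $Gv$, so the merge forming $T'$ takes place inside a second copy of the blow up; there the bookkeeping behind $E'=E-1+n_K$ and $V'=V-1+n_C$ must be redone, but the same structural facts (one hub orbit, disjoint from the component orbits, with the hub star realizing a single-edge splitting of $G_e$) yield the same bounds.
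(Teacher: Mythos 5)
Your proposal is correct and follows essentially the same route as the paper: apply Theorem \ref{thm:main} with $T_\infty=T$ and $T_\calf$ the Bass--Serre tree of a finite-edge-group splitting relative to $\calH$, dispose of case (\ref{it:finite-edge-gp-case}) as the first alternative, and in case (\ref{it:cleave-case}) blow up $v$ to $\cleave T_v$, attach $e$ to $v_e$, collapse $Ge$, and recognize the result as a cleaving of $T$. The only difference is one of detail, to your credit: you actually verify the orbit-count bounds (via $n_K\le 2$ from the one-edge splitting $C(\tau_e^-,\overline{\epsilon})$ of $G_e$ and $n_C\le n_K$ from connectedness of the quotient) and flag the degenerate case $u\in Gv$, whereas the paper reads these off from Figures \ref{fig:cleave-leaf-space} and \ref{fig:operations-example} without explicit bookkeeping.
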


\begin{proof}
  Suppose we are in case (\ref{it:cleave-case}) of Theorem
  \ref{thm:main}. Then some vertex $v$ gets blown up to $\cleave T_v$
  and some vertex stabilizer of $\cleave T_v$ coincides with
  $G_e$. Specifically $\cleave T$ can be obtained by deleting each
  blown up vertex $v$ from $T$ and then equivariantly reattaching
  every edge $e$ incident to $v$ to the corresponding vertex in
  $\cleave T_v$.

  In particular if $e \in \edges{T}$ is an the edge incident to $v$
  that satisfies (\ref{it:cleave-case}) of Theorem \ref{thm:main} then
  it is attached to the vertex $v_e \in \verts{\cleave T_v}$. We
  obtain $T'$ by collapsing the $G$-orbits of $e$ to points. This
  amounts to identifying the vertex $v_e$ to the vertex $u_e \in
  \verts{\cleave T}$ that is the other endpoint of $e$. From Figure 6
  it is clear that $T'$ is obtained by cleaving $T$.

  We finally note that in passing from $T$ to $\cleave T$ and then
  from $\cleave T$ to $T'$, the vertex and edge groups are
  non-increasing. Otherwise, the required properties of $T'$ are
  immediately satisfied by Theorem \ref{thm:main} (see Figure
  \ref{fig:operations-example}.)
\end{proof}

Finally, we can give our description of the decompositions of
virtually free groups as amalgamated free products or HNN extensions.

\begin{proof}[Proof of Theorem \ref{thm:virt-free}]
  We shall prove this result by successively applying Corollary
  \ref{cor:cleave-tree} until some desirable terminating condition is
  met. On one hand, virtually free groups have no one-ended subgroups
  so we will always be able to apply our Corollary; furthermore,
  virtually free groups are finitely presented. It now follows by Dunwoody
  accessibility \cite{Dunwoody-1985} that there are no infinite chains
  $C_1 \cafsd C_2 \cafsd \ldots$ of virtually free groups (recall
  Definition \ref{defn:accessible}) and that all such chains must
  terminate with finite groups. \\~
  
  \noindent \textbf{First construction} (pass to relatively one ended
  vertex subgroups): Let $T$ be a $G$ tree with one edge orbit $Ge$
  with $G_e$ infinite. By accessibility, we may pass to a tree
  $T^{(2)}$ obtained by blowing up some vertices $v$ of $T$ to trees
  $\cleave T_v$ such that the vertex groups of $\cleave T_v$ are
  either finite or one ended relative to the stabilizers $G_f$ of the
  incident edges $f \ni v$.  If possible, we take $T^{(1)}\subset
  T^{(2)}$ to be an infinite connected subtree obtained by deleting
  edges with finite stabilizers and we set $G^{(1)} = G_{T^{(1)}}$,
  i.e. the setwise stabilizer. We note that the vertex groups of
  $T^{(1)}$ are $\dsfac$ the vertex groups of $T$, and vertex groups
  are one ended relative to the incident edge groups. \\~
  
  \noindent \textbf{Second construction} (pass to smaller edge
  groups): The second construction utilizes Corollary
  \ref{cor:cleave-tree}. If $T_i$ is a $G_i$-tree with one edge orbit
  whose vertex groups are one ended relative to the incident edge
  groups, we first apply Theorem \ref{thm:main} to blow up a vertex $v
  \in \verts{T_i}$, and find ourselves in case (\ref{it:cleave-case})
  of the theorem. If $\cleave T_v$ has a finite edge group then $G_v$
  is not one-ended relative to the incident edge groups, contradicting
  our assumption.  By Corollary \ref{cor:cleave-tree} we can collapse
  an edge of the blowup of $T_i$ to get a cleaving $T_i'$ that has at
  most two edge orbits, with edge groups $\pdsfac$ the edge groups of
  $T_i$. The new vertex groups are also $\dsfac$ the old vertex
  groups. If there are two edge orbits we obtain $T_{i+1}\subset T_i'$
  as a maximal subtree containing only one edge orbit and we
  set $G_{i+1} = (G_i)_{T_{i+1}}$, i.e. we take the setwise
  stabilizer. See Figure \ref{fig:operations-example}. If $T'$ already
  has only one edge orbit then $T_{i+1} = T_i$ and $G_{i+1} = G_i$.
  \\~

  In both constructions, we pass to subgroups that split as graphs of
  groups such that the edge groups and vertex groups are $\dsfac$ the
  edge and vertex groups of the original splitting of the overgroup.

  We start with the amalgamated free product case. Let $T = T_0$ be
  the Bass-Serre tree corresponding to the splitting given in
  (\ref{it:VF-A}) of the statement of Theorem
  \ref{thm:virt-free}. Take the blow up $T_0^{(2)}$ obtained from the
  first construction. If one of the vertex groups of this blow-up
  coincides with an incident edge group we are done. Otherwise we may
  pass to the $G^{(1)}$ tree $T_0^{(1)}$ which still has one edge
  orbit, two vertex orbits and whose vertex groups are one ended
  relative to the incident edge groups. Furthermore because the new
  vertex groups are $\dsfac$ the vertex groups of $T$, if the
  statement of the theorem holds for $G^{(1)}$ and the splitting
  corresponding to its action on $T_0^{(1)}$ (which is also an
  amalgamated free product) then the statement also holds for $G$ and
  the splitting corresponding to its action on $T$.

  We can now apply our second construction to the $G_0^{(1)}$-tree
  $T_0^{(1)}$ to obtain a $G_1$-tree $T_1$, which again must have one
  edge orbit and two vertex orbits. Furthermore for the (conjugacy
  class) of the edge group, we have a proper containment $C_1\pdsfac
  C$. Again, because of the vertex groups of $T_1$ are $\dsfac$ the
  vertex groups of $T_0^{(1)}$ if the Theorem holds for this subgroup,
  it holds for $G$.

  We repeatedly apply our construction obtaining a sequence of groups
  that split as amalgamated free products. Each time we do the second
  construction we pass to a smaller edge group; so that by
  accessibility, at some point there is some subgroup $G_i$ acting on
  $T^{(2)}_i$ (see first construction) such that the vertex groups
  split as graphs of groups with finite edge groups and one of the
  incident edge groups coincides with the vertex group. So, since
  $\dsfac$ is transitive, (\ref{it:VF-A}) of Theorem
  \ref{thm:virt-free} is satisfied.

  We now tackle the HNN extension case. The proof goes the same way,
  we repeatedly blow up, cleave, and pass to subtrees; the main
  difference is that the $G$-tree $T$ has only one vertex orbit. If at
  some point one of the trees $T_i$ or $T_i^{(1)}$ has two vertex
  orbits, then these vertex groups are vertex groups of a splitting of
  the vertex group of $T_{i-1}$ with finite edge groups. It therefore
  follows that if $T_i$ satisfies (\ref{it:VF-A}) of Theorem
  \ref{thm:virt-free}, then $T_{i-1}$ satisfies (\ref{it:VF-H}) of
  Theorem \ref{thm:virt-free}; thus so must our original splitting
  $T$, by transitivity of $\dsfac$. Otherwise the proof goes through
  identically.
\end{proof}

\bibliographystyle{alpha}\bibliography{biblio.bib}
\end{document}